\documentclass{article}
\usepackage[utf8]{inputenc}

\title{Maximizing directed cycles in tournaments}

\usepackage{amsmath, amssymb, amsthm, url, tikz, verbatim, tkz-graph}
\usepackage{blkarray}
\usetikzlibrary{arrows.meta}

\newtheorem{theorem}{Theorem}[section]
\newtheorem{proposition}[theorem]{Proposition}
\newtheorem{lemma}[theorem]{Lemma}

\newtheorem{corollary}[theorem]{Corollary}

\newtheorem{question}[theorem]{Question}

\newtheorem{conjecture}[theorem]{Conjecture}

\DeclareMathOperator{\Tr}{Tr}
\DeclareMathOperator{\diag}{diag}
\usetikzlibrary{calc}
\usetikzlibrary{patterns}
\usetikzlibrary{decorations.markings}
\usetikzlibrary{arrows,shapes.geometric,%
	decorations.pathreplacing,shapes,shadows}

\date{}

\author{ Yijia Fang\thanks{Department of Mathematics, National University of Singapore. Email: fangyijia@u.nus.edu.}
\and
Hao Huang \thanks{Department of Mathematics, National University of Singapore. Email: huanghao@nus.edu.sg. Research supported in part by a start-up grant at NUS and an MOE Academic Research Fund (AcRF) Tier 1 grant A-8003627.}}

\newcommand{\floor}[1]{\left\lfloor #1 \right\rfloor}

\begin{document}

\maketitle

\begin{abstract}
    Determining the combinatorial structures that maximize the number of prescribed substructures is a central theme in extremal combinatorics. Grzesik, Kr\'al', Lov\'asz and Volec showed that when $\ell$ is not divisible by $4$, the random tournament contains asymptotically the most directed cycles of length $\ell$ among all $n$-vertex tournaments. In the paper, we resolve the remaining cases where $\ell$ is divisible by $4$. We show that, in this regime, the so-called carousel tournament asymptotically maximizes the number of directed $\ell$-cycles among all $n$-vertex tournaments, and in particular contains strictly more such cycles than the random tournament. This confirms the conjecture of Bartley and Day.
\end{abstract}

\section{Introduction}


Estimating the number of copies of a graph in a family of graphs is a fundamental problem in extremal graph theory. A major open problem in this area is \textit{Sidorenko's conjecture}, posed by Erd\H os and Simonovits \cite{Simonovits1984} and Sidorenko \cite{Sidorenko1993} independently. Roughly speaking, Sidorenko’s conjecture states that, the homormophism density of any fixed bipartite graph $H$ is minimized by a random graph, among graphs with a prescribed edge density.
\begin{conjecture}[Sidorenko]
    For every bipartite graph $H$ with $v$ vertices and $e$ edges, and graph $G$ on n vertices with edge density $p>0$, there are at least $(p^e+ o(1))n^v$ copies of H in G.
\end{conjecture}

While Sidorenko's conjecture is a natural characterization of extremal properties a random graph might have, the corresponding problem for directed graphs, or digraph for short is considerably more intricate. Many such extremal results in the directed setting focus on finding the extremal number of not necessarily induced copies of a fixed digraph $H$ in a particular family of directed graphs, tournaments. A tournament is an orientation of a complete graph. The random tournament is obtained by orienting every edge of the complete graph independently in either direction with probability $\frac{1}{2}$. Even in this restricted setting, a digraph $H$ may be either Sidorenko, meaning that its density is minimized by the random tournament; anti-Sidorenko, meaning that its density is maximized by the random tournament; or neither. An elegant result by Zhao and Zhou \cite{ZhaoZhou2020} completely characterizes impartial digraphs, namely, those for which the number of copies in a host tournament depends only on the order of the tournament. For directed paths $P_k$ of fixed length $k$, Sah, Sawhney, and Zhao \cite{SahSawhneyZhao2023} proved that among all $n$-vertex tournaments, the random tournament (and actually any regular tournament) contains asymptotically the maximum number of copies of $P_k$. These results confirmed unpublished conjectures of Fox, Huang, and Lee. A substantial body of work has investigated analogous extremal questions for other fixed digraphs $H$. Notable examples include the work of
Coregliano and Razborov \cite{CR2017} on transitive subtournaments $H$, Burke, Lidick\'y, Pfender and Philips \cite{BLPP2021} on all tournaments with at most $4$ vertices,  Coregliano, Parente, and Sato \cite{CPS2019} on certain five-vertex tournaments. More recently, the two papers by Fox, Himwich, Mani, and Zhou \cite{FHMZ2024, FHMZ2025}, the work of He, Mani, Nie, Tung, and Wei \cite{HMNTW2025}, and the paper of Chen, Clemen and Noel \cite{CCN2026} all aim to characterize additional anti-Sidorenko oriented graphs and explore their connections with the corresponding undirected extremal problems.

Unlike the paths and impartial digraphs discussed earlier, directed cycles do not always attain their maximum count in the random tournament. In fact, their extremal behavior highly depends on the cycle length $\ell$. Let $C(n,\ell)$ be the maximum number of directed cycles of length $\ell$ in an $n$-vertex tournament, and $R(n,\ell)=\frac{(\ell-1)!}{2^{\ell}}\binom{n}{\ell}$ be the expected number of directed cycles of length $\ell$ in the random $n$-vertex tournament. Define the limit of their ratio
\[
    c(\ell)=\lim_{n\to\infty}\frac{C(n,\ell)}{R(n,\ell)}.
\]
An early result of Kendall and Smith \cite{KS1940} in 1940 implies that $c(3)=1$, attained by the random tournament. Two decades later, Colombo \cite{Colombo1964}, and Beineke and Harary \cite{Beineke1965} established $c(4)=4/3$, with the maximum attained by the so-called carousel tournament. Here a \textit{carousel tournament} $\mathrm{Car}_n$ with $n$ vertices is defined as the directed Cayley graph \(\textrm{Cay}(\mathbb{Z}/n\mathbb{Z}, \{1,\cdots, \frac{n-1}{2}\})\) for odd $n$. In other words, the vertices are $\mathbb{Z}/n\mathbb{Z}=\{\overline{0}, \overline{1}, \cdots, \overline{n-1}\}$, and there is a directed edge from $i$ to $j$ if $j-i \in \{\overline{1}, \cdots, \overline{\frac{n-1}{2}}\}$. For even $n$, $\mathrm{Car}_n$ can be obtained from $\mathrm{Car}_{n+1}$ by deleting one vertex. More recently in 2016, Savchenko \cite{Savchenko2016} established
bounds on directed $5$-cycles and $6$-cycles in regular tournaments. And in 2017, Komarov and Mackey \cite{KM2017} proved that $c(5)=1$, with the maximum again given by the random tournament. 

\begin{figure}[ht]
\centering
\begin{tikzpicture}[
    scale=0.7,
    transform shape,
    vertex/.style={
        circle,
        draw,
        fill=white,
        minimum size=7mm,
        inner sep=0pt
    },
    edge/.style={
        -{Stealth[length=2mm]},
        thin,
        shorten <=3.5mm,
        shorten >=3.5mm
    }
]

\foreach \i in {0,...,6} {
    \coordinate (p\i) at ({90-360*\i/7}:3cm);
}

\foreach \i in {0,...,6} {
    \foreach \d in {1,2,3} {
        \pgfmathtruncatemacro{\j}{mod(\i+\d,7)}
        \draw[edge] (p\i) -- (p\j);
    }
}

\foreach \i in {0,...,6} {
    \node[vertex] at (p\i) {$\i$};
}

\end{tikzpicture}
\caption{The $7$-vertex carousel tournament $\textup{Car}_7$}
    \label{fig:cycle}
\end{figure}

Based on these results, Bartley \cite{Bartley2018} and Day \cite{Day2017} conjectured that when $\ell$ is not divisible by $4$, the random tournament is asymptotically optimal, while for $\ell$ divisible by $4$, the asymptotic maximizer is the carousel tournament. The first part of their conjecture was confirmed in 2023 by Grzesik, Kr\'al', Lov\'asz, and Volec \cite{Grzesik2023}. In other words, they proved that $c(\ell)=1$ when $\ell$ is not divisible by $4$. For the remaining cases where $\ell=4k$, by calculations, Bartley \cite{Bartley2018} formulated this conjecture in the following form:
\[
    c(4k) = 1+2 \cdot \sum_{i=1}^{\infty} \left(\frac{2}{(2i-1)\pi}\right)^{4k}.
\]
Grzesik et al. \cite{Grzesik2023} proved $c(4k)\le 1+(\frac{2}{\pi}+o(1))^{4k}$, matching the order of the conjectured value of $c(4k)$ when $k \rightarrow \infty$. They also showed that $c(8)=\frac{332}{315}$ as conjectured, leaving the exact value of $c(4k)$ for $k\ge 3$ unknown.


In this paper, we resolve the remaining cases $n=4k$ for every positive integer $k$, confirming the conjecture by Bartley \cite{Bartley2018} and Day \cite{Day2017}.

\begin{theorem} \label{thm_main}
Let \(k\) be a fixed positive integer, then the
maximum number of directed cycles of length \(4k\) in an \(n\)-vertex tournament is
equal to
\[\left(1+2 \cdot \sum_{i=1}^{\infty} \left(\frac{2}{(2i-1)\pi}\right)^{4k}+o(1)\right)\frac{(4k-1)!}{2^{4k}}\binom{n}{4k}\]
when $n$ goes to infinity. In other words, $c(4k)=1+2 \cdot \sum_{i=1}^{\infty} \left(\frac{2}{(2i-1)\pi}\right)^{4k}$. Furthermore, this maximum can be asymptotically attained by the carousel tournament.
\end{theorem}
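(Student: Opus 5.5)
We pass to the limit object and use spectral methods. Encode a large tournament $T$ on $n$ vertices by a tournament kernel $W:[0,1]^2\to[0,1]$ with $W(x,y)+W(y,x)=1$. Write $W=\frac12 J + A$, where $A(x,y)=W(x,y)-\frac12$ is antisymmetric and bounded by $\frac12$. The homomorphism density of the directed $\ell$-cycle is $t(C_\ell,W)=\Tr\big((\tfrac12 J+A)^\ell\big)$. Up to $o(1)$, the number of directed $\ell$-cycles equals $\frac{n^\ell}{\ell}t(C_\ell,W)$, and the random tournament corresponds to $A=0$. It therefore suffices to prove
\[
\Tr\big((\tfrac12 J+A)^{4k}\big)\le 2^{-4k}\Big(1+2\sum_{i\ge1}\big(\tfrac{2}{(2i-1)\pi}\big)^{4k}\Big),
\]
and to check that $\mathrm{Car}_n$ attains this value.

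For the lower bound, note that the carousel converges to the circulant kernel $W(x,y)=\mathbf 1[(y-x)\bmod 1\in(0,\tfrac12)]$. This kernel is diagonalized by the characters $e^{2\pi i m x}$. The eigenvalue for $m=0$ is $\frac12$. For $m\neq0$ the eigenvalue is $\int_0^{1/2}e^{2\pi i m t}\,dt$, which vanishes for even $m$ and has modulus $\frac{1}{\pi|m|}$ for odd $m$, with phase $\pm i$. Each such eigenvalue raised to the power $4k$ is real and positive. Summing over the odd $m=\pm(2i-1)$ gives exactly $2^{-4k}\cdot 2\sum_i\big(\frac{2}{(2i-1)\pi}\big)^{4k}$, added to the term $2^{-4k}$ from $m=0$. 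This step is routine.

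For the upper bound, expand $\Tr\big((\frac12 J+A)^{4k}\big)$ as a sum over words in $\frac12 J$ and $A$. Since $J$ has rank one, each word with at least one $J$ becomes a product of terms $\frac12\langle \mathbf 1, A^{j}\mathbf 1\rangle$. Antisymmetry kills the odd powers, so only $\langle\mathbf 1,A^{2j}\mathbf 1\rangle=(-1)^j\|A^j\mathbf 1\|^2$ survive. The word with no $J$ contributes $\Tr(A^{4k})=\sum_\lambda \lambda^{4k}$, where the eigenvalues are $\pm i\mu_r$ with $\mu_r\ge0$. The plan is to recast everything in spectral data. Write $\mathbf 1=\sum_r c_r v_r$ in the eigenbasis of the skew operator, where it lives in two-dimensional rotation blocks. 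The density then becomes an explicit function $F(\mu,c)$, a rational-type expression coming from the resolvent $\langle\mathbf 1,(zI-A)^{-1}\mathbf 1\rangle$. Equivalently, $t(C_\ell,W)$ is the $\ell$-th power trace of the rank-one perturbation, and its eigenvalues are the roots $z$ of $1=\frac12\langle\mathbf 1,(z-A)^{-1}\mathbf 1\rangle$.

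The main obstacle is to prove that $F$ is maximized by the carousel spectrum under the constraints imposed by $0\le W\le1$. Spectral identities alone do not suffice, because one needs a pointwise-type constraint. I would use the constraint that $A$ is bounded with $|A|\le\frac12$, which gives the bound $\|A\|_{\infty\to1}$-type inequalities. More precisely, for every test function $f$ with $|f|\le1$, the quantity $\langle f,A\mathbf 1\rangle$ and its analogues are controlled, and this yields the constraints $\sum_r c_r^2\mu_r^{2j}\le$ (carousel values), which behave like moment constraints. Grzesik et al.\ already obtained estimates of this type. The refinement I would try is to show that the eigenvalues $z$ of $\frac12 J+A$ satisfy $\sum_z |z|^{4k}\le\sum_{m\text{ odd}}(\pi m)^{-4k}+2^{-4k}$. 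The route is to compare $\frac12J+A$ with the step kernel through a majorization argument: the singular values of $W$ are dominated, in the weak sense, by those of the carousel kernel. Since $x\mapsto x^{4k}$ is convex and increasing, Karamata's inequality then gives the bound, and the positivity of $z^{4k}$ for $\ell\equiv0\pmod 4$ reduces the trace to $\sum|z|^{4k}$. Proving this majorization, namely that among $[0,1]$-valued kernels whose symmetric part is $\frac12$ the partial sums of singular values are at most those of the half-step circulant, would be the technical heart of the argument. I expect it to go through a rearrangement inequality that reduces to the circulant case and then to a Fourier computation.
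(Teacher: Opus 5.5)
Your lower-bound computation for the limiting carousel kernel is correct. The eigenvalue inequality you aim for, $\sum_z|z|^{4k}\le 2^{-4k}+\sum_{m\text{ odd}}(\pi m)^{-4k}$, is also true. The route you propose to it fails, however: the singular values of a tournament kernel $W=\frac12J+A$ are \emph{not} weakly majorized by those of the half-step circulant. Take the transitive kernel $W(x,y)=\mathbf 1[x<y]$. Then $W^*W$ has kernel $\min(x,y)$, so $s_1(W)=2/\pi>1/2=s_1(\mathrm{Car})$. Moreover
\[
\sum_j s_j(W)^4=\Tr\big((W^*W)^2\big)=\int_0^1\!\!\int_0^1\min(x,y)^2\,dx\,dy=\tfrac16 ,
\]
which is twice the carousel value $2^{-4}\cdot\frac43=\frac1{12}$. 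So no chain $\Tr W^{4k}\le\sum|z|^{4k}\le\sum_j s_j(W)^{4k}\le(\text{carousel value})$ can hold. The kernel $W$ is far from normal (this one is quasinilpotent, with $\Tr W^{4k}=0$), and passing to singular values of $W$ already loses a factor $2$ at $k=1$. Your ``moment constraints'' $\sum_r c_r^2\mu_r^{2j}\le$ (carousel values) are also wrong as stated: the carousel has $A\mathbf 1=0$, so they would force $A\mathbf 1=0$ for every tournament. A minor point: for general $W$ the $z$ are not purely imaginary, so $z^{4k}$ need not be positive; you only need $\operatorname{Re}z^{4k}\le|z|^{4k}$.

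Two separate ingredients are missing.

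\textbf{Decoupling the rank-one part.} This must be done at the level of eigenvalue moduli, not singular values. If $\{-i\mu_r\}$ is the spectrum of $A$ (with multiplicity), then $\sum_z|z|^p\le 2^{-p}+\sum_r|\mu_r|^p$. The paper proves this (Lemma~\ref{lem_lambda_mu}, rescaled) via the real Schur form of $K+uu^T$:
\begin{itemize}
\item the $1\times1$ blocks give real eigenvalues $v_i^2$ with $\sum v_i^2\le1$;
\item each $2\times2$ block has $|\lambda|=|c_j|$, where $\mp c_j$ are Rayleigh quotients of the Hermitian matrix $iK$ at orthonormal vectors $z_j,w_j$;
\item convexity then gives $2\sum_j|c_j|^p\le\sum_r|\mu_r|^p$.
\end{itemize}

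\textbf{Bounding the skew part alone.} This is where the entrywise constraint enters. In matrix form, $\Tr[A^{4k}]\le\Tr[T_n^{4k}]$ for every $\pm1$ skew-symmetric $A$. The paper obtains this from the coordinatewise majorization $(Ax)^*\prec K_nx^*$, iterated to $(A^{2k}e_j)^*\prec K_n^{2k}e_1$, and combined with Karamata for $t\mapsto t^2$ on $(A^{4k})_{jj}=\|A^{2k}e_j\|^2$.

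The extremal skew matrix here is the \emph{transitive} one, which is signed-permutation conjugate to the carousel's. This explains why any majorization must be phrased for $A$ rather than for $W$: the transitive and carousel tournaments share the same skew spectrum, yet their $W$'s have very different singular values. It also explains why the carousel, which additionally has $A\mathbf 1=0$, makes both inequalities tight.
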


It is not hard to observe that as $n \rightarrow \infty$, counting directed cycles of fixed length becomes asymptotically equivalent to counting closed directed walks of the same length. So, the problem boils down to estimating the trace of the powers of the tournament's adjacency matrix. We apply techniques from linear algebra to establish two results on the trace of the powers of skew-symmetric matrices, which together yield our main result, Theorem \ref{thm_main}. We believe these two theorems are of independent interest beyond their use here.

\begin{theorem} \label{thm_non_negative}
For a fixed integer \(k \ge 1\), and an arbitrary
positive integer \(n\), for every skew-symmetric \(n \times n\) matrix
\(A\) with off-diagonal entries in \(\{-1, 1\}\), we have
\[\Tr[(A+J)^{4k}] \le \Tr[A^{4k}]+\Tr[J^{4k}],\]where \(J\) is the all-one
\(n \times n\) matrix.
\end{theorem}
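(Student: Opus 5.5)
The plan is to exploit that $J$ has rank one, so that $\Tr[(A+J)^{4k}]-\Tr[A^{4k}]$ becomes an explicit polynomial in the quadratic forms $\mathbf 1^{T}A^{a}\mathbf 1$. Write $J=n\,uu^{T}$ with $u=\mathbf 1/\sqrt n$, and use the determinant identity
\[
\det(I-t(A+J))=\det(I-tA)\,\bigl(1-n\,t\,u^{T}(I-tA)^{-1}u\bigr).
\]
Taking $-\log$ of both sides and comparing coefficients of $t^{m}$ (using $-\log\det(I-tM)=\sum_m t^m\Tr[M^m]/m$) gives
\[
\sum_{m\ge1}\frac{t^m}{m}\bigl(\Tr[(A+J)^m]-\Tr[A^m]\bigr)=-\log\bigl(1-g(t)\bigr),\qquad g(t)=n\sum_{a\ge0}t^{a+1}\,u^{T}A^{a}u .
\]
Since $A$ is skew-symmetric, $u^{T}A^{a}u=0$ for odd $a$. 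Diagonalize $A=\sum_j i\lambda_j v_jv_j^{*}$ with $\lambda_j$ real, and put $w_j=|\langle u,v_j\rangle|^2$, so $w_j\ge0$ and $\sum_j w_j=1$. After the rescaling $s=nt$ and $\mu_j=\lambda_j/n$ this gives
\[
g=s\sum_j\frac{w_j}{1+\mu_j^2s^2}=\sum_{r\ge0}(-1)^r M_{2r}\,s^{2r+1},\qquad M_{2r}=\sum_j w_j\mu_j^{2r}.
\]
The target inequality is then exactly the statement that the coefficient of $s^{4k}$ in $-\log(1-g(s))$ is at most $1/(4k)$. This is the value for $g=s$, i.e.\ for $A=0$, and it accounts for the term $\Tr[J^{4k}]=n^{4k}$.

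Next I would isolate the constraint coming from the $\pm1$ entries. All eigenvalues of $A$ are purely imaginary with $|\lambda_j|\le$ the spectral radius of a $\pm1$ skew-symmetric matrix. That radius is at most $\cot(\pi/2n)\approx 2n/\pi$, attained by the carousel matrix, whose spectrum is explicitly computable via characters of $\mathbb Z/n\mathbb Z$. I would prove this bound using $\|A\|\le\|A\|$ for the "all $+1$ above the diagonal" pattern via a Perron-type comparison on $iA$. If that is too weak, I would instead settle for $|\mu_j|\le 2/\pi+o(1)$, or $|\mu_j|\le1$. So the problem reduces to a one-dimensional extremal problem: for any probability measure $\nu$ on $\mu^2\in[0,c]$, show that
\[
[s^{4k}]\,\Bigl(-\log\Bigl(1-\int\frac{s\,d\nu}{1+\mu^2s^2}\Bigr)\Bigr)\le\frac1{4k}.
\]

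To attack this, I would expand $-\log(1-g)=\sum_r g^r/r$. The coefficient of $s^{4k}$ collects products of $r$ odd parts summing to $4k$, so $r$ is even. Each term carries the sign $(-1)^{(4k-r)/2}=(-1)^{r/2}$ times a product of moments. Equivalently, the correction splits into negative blocks ($r\equiv2 \bmod 4$) and positive blocks ($r\equiv0 \bmod4$, $r<4k$). A cleaner route avoids the series: write $1-g(s)=\int \frac{1+\mu^2s^2-s}{1+\mu^2s^2}\,d\nu$ and use contour integration, $[s^{4k}]=\frac{1}{2\pi i}\oint s^{-4k-1}(\cdot)\,ds$. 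One then locates the zeros of $1-g$, which interlace with the poles $\pm i/\mu_j$, and reads off the coefficient as $\frac1{4k}\sum(\text{roots})^{-4k}$ minus $\frac1{4k}\sum(\text{poles})^{-4k}$. The poles contribute $\sum_j(\pm i\mu_j)^{4k}$ terms with positive sign; these cancel against $\Tr[A^{4k}]$ bookkeeping. The roots of $1-g$ are the eigenvalues of $(A+J)/n$.

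The main obstacle is this last step: showing that the roots $\rho$ of $1-g$ satisfy $\sum\rho^{4k}-\sum(i\mu_j)^{4k}\le 1$. Here exponent $4k$ is essential, since $(i\mu)^{4k}=\mu^{4k}>0$ and the inequality fails for other residues. I would first try an interlacing/majorization argument. The equation $1-g=0$, after substituting $s=1/z$, reads $z=\sum_j w_j z^2/(z^2+\mu_j^2)$. This is a rank-one perturbation, so its roots are one real root near $1$ plus roots on or near the imaginary axis, interlaced with $\pm i\mu_j$. Combined with convexity of $x\mapsto x^{2k}$, this should show that moving spectral weight toward the imaginary axis can only decrease $\sum\rho^{4k}$ relative to $1+\sum\mu_j^{4k}$.
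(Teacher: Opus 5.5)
There is a genuine gap. Your determinant-lemma reduction is correct, but it only restates the theorem. By construction $1-g(s)=\det(I-sM)/\det(I-sK)$ with $K=A/n$ and $M=(A+J)/n$. So the coefficient of $s^{4k}$ in $-\log(1-g)$ is exactly $\frac{1}{4k}\bigl(\sum_i\rho_i^{4k}-\sum_j\mu_j^{4k}\bigr)$, where the $\rho_i$ are the eigenvalues of $M$, and the contour-integral route just re-derives this identity. The step you call the main obstacle, $\sum_i\rho_i^{4k}\le 1+\sum_j\mu_j^{4k}$, is therefore the entire content of the theorem. You offer only a heuristic for it, and that heuristic rests on a wrong picture of the spectrum of $M=K+uu^T$:
\begin{itemize}
\item The real parts of the $\rho_i$ sum to $\Tr(uu^T)=1$.
\item For $K=\begin{bmatrix}0&\mu\\-\mu&0\end{bmatrix}$ and $u=e_1$, the eigenvalues of $M$ are $\frac12\pm\sqrt{\frac14-\mu^2}$. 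They are real for $\mu<\frac12$ and have real part $\frac12$ for $\mu>\frac12$.
\item With two such blocks (e.g.\ $\mu_1=10^{-3}$, $\mu_2=0.3$, weights $0.01$ and $0.99$), all four eigenvalues of $M$ are real and positive.
\end{itemize}
So there is no interlacing with the $\pm i\mu_j$ to exploit. Convexity of $x\mapsto x^{2k}$ also says nothing about complex $\rho_i$ until you pass to moduli. The spectral-radius bound is beside the point as well, and the ``Perron-type comparison'' as written is not an argument. The inequality holds for every real skew-symmetric $A$, so your measure problem is true with no restriction on the support of $\nu$.

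What is missing is the paper's key lemma: bound moduli, and treat real and non-real eigenvalues of $M$ separately. Since conjugate pairs satisfy $\rho^{4k}+\bar\rho^{4k}\le 2|\rho|^{4k}$, it suffices to show $\sum_i|\rho_i|^{p}\le 1+\sum_j|\mu_j|^p$.

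Take a real Schur form, so $Q^TMQ$ is quasi-upper-triangular and equals $L+vv^T$, with $L=Q^TKQ$ skew-symmetric and $v=Q^Tu$ a unit vector.

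\begin{itemize}
\item \emph{Real eigenvalues.} Each $1\times1$ diagonal block equals $v_i^2$, because $L_{ii}=0$. So the real eigenvalues contribute at most $\sum_i v_i^{2p}\le\sum_i v_i^2\le 1$.
\item \emph{Non-real eigenvalues.} Each $2\times2$ block has the form $\begin{bmatrix}a^2&c+ab\\-c+ab&b^2\end{bmatrix}$ with determinant $c^2$, so its conjugate pair has modulus $|c|$. Moreover $c=\pm\langle iKz,z\rangle$ for $z=(x+iy)/\sqrt2$, where $x,y$ are the corresponding orthonormal columns of $Q$. Apply Jensen's inequality in an eigenbasis of the Hermitian matrix $iK$, then Bessel's inequality for the orthonormal family $\{z_j,\bar z_j\}$. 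This gives $\sum_{\rho_i\notin\mathbb R}|\rho_i|^p\le\sum_j|\mu_j|^p$.
\end{itemize}

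Your generating-function setup has no counterpart to this step, and without it the proposal does not prove the statement.
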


A \textit{signed permutation matrix} is a square matrix with exactly one nonzero entry equal to either $1$ or $-1$ in each row and each column. A signed permutation matrix $Q$ is always orthogonal, i.e. $Q^TQ=I$ where $I$ is the identity matrix, and hence $\Tr[Q^TAQ]=\Tr[A]$ for any matrix $A$.

\begin{theorem} \label{thm_trace}
For a fixed integer \(k \ge 1\), and an arbitrary
positive integer \(n\), for every skew-symmetric \(n \times n\) matrix
\(A\) with off-diagonal entries in \(\{-1, 1\}\), we have
\[\Tr[A^{4k}] \le \Tr[T_n^{4k}],\]where \(T_n\) is the skew-symmetric
matrix with \[
T_n=(t_{ij})_{i,j=1}^n,\qquad  
t_{ij}=  
\begin{cases}  
1,&i<j,\\
-1,&i>j,\\
0,&i=j.  
\end{cases}
\] 
Furthermore, the equality holds if and only if $A=Q^TT_nQ$ for some signed permutation matrix $Q$.
\end{theorem}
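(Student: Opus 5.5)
Since $A$ is real skew-symmetric, it is normal and its eigenvalues are $\pm i\lambda_1,\dots$ (together with $0$ when $n$ is odd). Hence $\Tr[A^{4k}]=\sum_j(i\lambda_j)^{4k}=\sum_j\lambda_j^{4k}$, the sum of the $4k$-th powers of the singular values of $A$. The Frobenius norm is fixed: $\sum_j \sigma_j(A)^2=\|A\|_F^2=n(n-1)$ for every admissible $A$. So the statement is a Schatten-$4k$-norm maximization with the Schatten-$2$ norm pinned.

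The natural route is \emph{majorization}. I would aim to prove that, for every $r$, the sum of the $r$ largest eigenvalues of $-A^2=A^TA$ is at most the corresponding sum for $-T_n^2$. Since the totals agree and $x\mapsto x^{2k}$ is convex, Karamata's inequality applied to the squared singular values then gives $\Tr[A^{4k}]\le\Tr[T_n^{4k}]$. Strict convexity handles the equality case: equality forces the two spectra to coincide.

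First I compute the spectrum of $T_n$ explicitly. $T_n$ is a skew-circulant matrix: its first row is $(0,1,\dots,1)$ and the wrapped entries are negated. Its eigenvalues are therefore $\sum_{m=1}^{n-1}\omega^m$ with $\omega^n=-1$, which gives $\pm i\cot\frac{(2j-1)\pi}{2n}$. By Ky Fan's principle, the partial sums are
\[\max_{V^TV=I_r}\|AV\|_F^2 .\]
So I need, for each orthonormal $r$-frame $V$, a comparison with the explicit cotangent values.

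To obtain the structure of extremizers, I would use convexity. The map $A\mapsto\Tr[(A^TA)^{2k}]$ is convex, so over the polytope of skew-symmetric matrices with off-diagonal entries in $[-1,1]$ it is maximized at a vertex $A$. At such a vertex, $A$ also maximizes the linear functional $\langle X,\,A^{4k-1}\rangle$ up to sign. This yields the fixed-point condition
\[a_{ij}=\operatorname{sign}\big((A^{4k-1})_{ij}\big)\quad\text{whenever this entry is nonzero.}\]
Next I write $A^{4k-1}$ through the spectral decomposition, $\sum_j \lambda_j^{4k-1}\operatorname{Im}(\bar z_{j,p}z_{j,q})$ up to a sign convention. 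If the top pair dominates, the sign pattern is $\operatorname{sign}\sin(\theta_q-\theta_p)$ for angles $\theta_p$, which is a circular (carousel-type) tournament. Negating the vertices whose angles lie in one half-plane, i.e.\ conjugating by a diagonal $\pm1$ matrix, rotates them by $\pi$. After that, all angles lie in a half-circle and the pattern becomes transitive. Hence $A=Q^TT_nQ$ for a signed permutation $Q$. Since conjugation by signed permutations preserves the spectrum, this is consistent with both the value and the equality statement.

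The main obstacle is justifying that the lower eigenvalues cannot spoil the sign pattern dictated by the top eigenpair. The factor $\lambda_j^{4k-1}$ helps when $k$ is large, but not uniformly for all $k\ge1$. I would first try to avoid the fixed-point route entirely and prove the Ky Fan partial-sum inequalities directly.

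For $r=1$, this amounts to showing $\|A\|_2\le\cot\frac{\pi}{2n}$. My approach would be to write $u^*Au$ for a unit complex vector $u=(\rho_pe^{i\theta_p})$ and bound it by the value attained when $\operatorname{sign}(a_{pq})$ agrees with $\operatorname{sign}\sin(\theta_q-\theta_p)$. This reduces the problem to the circular case, which after switching is $T_n$. Then I would attempt an induction on $r$ by deflating the top invariant $2$-plane. If the deflation fails, I would fall back to a local exchange argument: flip a single pair $a_{pq}$ in a non-transitive maximizer and show that $\Tr[A^{4k}]$ strictly increases.
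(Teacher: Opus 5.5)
Your reduction is fine as far as it goes: $\Tr[A^{4k}]=\sum_j\lambda_j^{4k}$, and $\|A\|_F^2=n(n-1)$ is fixed. The Ky Fan inequalities $\sum_{j\le r}\sigma_j(A)^2\le\sum_{j\le r}\sigma_j(T_n)^2$ for all $r$ would then finish via Karamata. Your $r=1$ sketch also works. For normal $A$ the spectral norm equals the numerical radius, and $|u^*Au|=2\bigl|\sum_{p<q}a_{pq}\rho_p\rho_q\sin(\theta_q-\theta_p)\bigr|$ is largest for the circular sign pattern. That pattern becomes transitive after negating the vertices with $\theta_p\in[\pi,2\pi)$.

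\textbf{The gap is the case $r\ge 2$}, which carries the whole difficulty, and there you have no argument.
\begin{itemize}
\item Deflating the top invariant $2$-plane leaves a matrix that is no longer a $\pm1$ skew-symmetric matrix, so the $r=1$ argument cannot be reapplied.
\item The condition $a_{ij}=\operatorname{sign}((A^{4k-1})_{ij})$ is only a first-order necessary condition at a maximizer. Nothing forces the top eigenpair to dominate there: a general tournament may have a flat spectrum, and even with a gap the lower pairs decide the sign wherever $\sin(\theta_q-\theta_p)$ is small.
\item The local exchange step is unspecified. It amounts to asserting that the signed-permutation class of $T_n$ is the only local maximum under single-edge flips.
\end{itemize}
Moreover, spectral majorization is a stronger statement than the theorem, since it would bound $\Tr f(A^TA)$ for every convex $f$. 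You have set yourself a harder problem than the one posed. The equality case has the same gap: equal spectra would still leave you to show that a matrix cospectral with $T_n$ is a signed-permutation conjugate of it.

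\textbf{The missing idea is to compare diagonal entries instead of spectra.} Since $(A^{2k})^T=A^{2k}$, one has $(A^{4k})_{jj}=\|A^{2k}e_j\|_2^2$. The paper proves the vector-level majorization $(Ax)^*\prec K_nx^*$ for a fixed symmetric matrix $K_n$, obtained from $T_n$ by permuting and sign-flipping rows and columns. The sum of the $r$ largest $|(Ax)_i|$ is bounded by an elementary triangle-inequality count, splitting the pairs into those inside the chosen index set and those crossing it.

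The partial column sums $\sum_{i\le r}(K_n)_{ij}$ are non-negative and non-increasing in $j$, so $K_n$ preserves weak majorization. Iterating gives $(A^{2k}e_j)^*\prec K_n^{2k}e_1$. Karamata with $t\mapsto t^2$ then bounds every diagonal entry by $(K_n^{4k})_{11}=(T_n^{4k})_{jj}$. The equality case falls out of this chain: it forces $(A^2e_j)^*=K_n^2e_1$, which yields the recursive source/sink structure of the transitive tournament.
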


\section{Proof of Theorem \ref{thm_non_negative}}

\begin{lemma} \label{lem_lambda_mu}

Let \(K\) be a real skew-symmetric \(n \times n\)
matrix whose eigenvalues are $\{-i\mu_j\}_{1 \le j \le n}$ for real numbers $\mu_j$'s, and \(u\) be a real unit column vector of dimension \(n\), and 
\(P=uu^T\). Let \(M=K+P\) and denote by 
\(\lambda_1, \cdots, \lambda_n\) its (possibly non-real) eigenvalues. Then for every positive integer
\(p \ge 1\), we have \[\sum_{i=1}^n |\lambda_i|^p \le 1+\sum_{j=1}^n |\mu_j|^p.\]
\end{lemma}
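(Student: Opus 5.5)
We plan to reduce the inequality to a majorization statement and then invoke Weyl's inequality between eigenvalues and singular values. Write $\sigma_1(M)\ge \cdots \ge \sigma_n(M)$ for the singular values of $M=K+P$, and order the eigenvalues so that $|\lambda_1|\ge\cdots\ge|\lambda_n|$. Weyl's theorem says that $(\log|\lambda_i|)$ is weakly majorized by $(\log\sigma_i(M))$. Since $x\mapsto e^{px}$ is convex and increasing, this gives
\[
\sum_i|\lambda_i|^p\le \sum_i\sigma_i(M)^p .
\]
It therefore suffices to prove that
\[
\sum_i \sigma_i(M)^p\le 1+\sum_j|\mu_j|^p .
\]
Because $K$ is real skew-symmetric, it is normal, so its singular values are exactly $|\mu_j|$.

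The key step is to compute $M^TM$. We have $M^T=-K+P$, hence
\[
M^TM=(-K+P)(K+P)=-K^2+P-KP+PK .
\]
We have used $P^2=P$, which holds because $u$ is a unit vector. Here $-K^2=K^TK\succeq 0$ has eigenvalues $\mu_j^2$.

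Rather than work with $M^TM$ directly, I would use a dilation argument. Alternatively, there is a cleaner route through the singular values of a sum: by Ky Fan / Thompson-type inequalities, $\sigma(K+P)$ is weakly majorized by $\sigma(K)+\sigma(P)$ sorted in decreasing order. The difficulty is that this yields $(|\mu_1|+1)^p+\sum_{j\ge2}|\mu_j|^p$, which is too weak. So the plain triangle inequality fails, and we must exploit skew-symmetry.

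The trick is the following. $M$ and $M^T=-K+P$ have the same singular values. Moreover,
\[
\tfrac12\bigl(M^TM+MM^T\bigr)=-K^2+P ,
\]
because the cross terms $-KP+PK$ and $KP-PK$ cancel. Now $N\mapsto \Tr[N^{p/2}]$ is convex on positive semidefinite matrices for $p\ge2$. Since $M^TM$ and $MM^T$ have the same spectrum, this gives
\[
\sum_i\sigma_i^p=\Tr[(M^TM)^{p/2}]\ge \Tr[(-K^2+P)^{p/2}] .
\]
That inequality points the wrong way, so this naive route also fails.

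Instead I would work with the eigenvalues directly through the Hermitian part. The matrix $iM=iK+iP$ has Hermitian part $iK$ and anti-Hermitian part $iP$. I would use the known result that, for $M=H_1+iH_2$ with $H_1,H_2$ Hermitian, the vector of eigenvalues of $M$ is majorized in an appropriate sense by the vector of eigenvalues of $H_1$ together with $i$ times those of $H_2$. Concretely, I would aim to prove
\[
\sum|\lambda_i|^2\le \sum\mu_j^2+1
\]
via Schur's inequality:
\[
\sum|\lambda_i|^2\le\|M\|_F^2=\|K\|_F^2+\|P\|_F^2+2\langle K,P\rangle=\sum\mu_j^2+1 .
\]
The cross term $\langle K,P\rangle$ vanishes because $K$ is skew and $P$ is symmetric. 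This settles the case $p=2$ cleanly.

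For general $p$, I would take a Schur triangularization $M=U(D+N)U^*$ and compare the diagonal $D=\mathrm{diag}(\lambda_i)$. Write $\mathrm{Re}\,\lambda_i=\langle Pv_i,v_i\rangle\in[0,1]$ and $\mathrm{Im}\,\lambda_i=\langle -iKv_i,v_i\rangle$, where the $v_i$ are the Schur vectors. Then the imaginary parts form a vector majorized by $(-\mu_j)$, and the real parts form a vector majorized by $(1,0,\dots,0)$. Consequently $\sum_i \mathrm{Re}\,\lambda_i=1$, and $\sum_i f(\mathrm{Im}\,\lambda_i)\le\sum_j f(\mu_j)$ for every convex $f$.

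The remaining task, which I expect to be the main obstacle, is to show
\[
\sum_i(a_i^2+b_i^2)^{p/2}\le 1+\sum_i|b_i|^p
\]
given only that $a_i\ge0$ and $\sum_i a_i=1$. The bound must be uniform over $p\ge1$. I would try to prove it using the extra structure that $\lambda_i$ are genuine eigenvalues, and in particular that they come in conjugate pairs because $M$ is real. If that fails, I would fall back on a continuity and perturbation argument in the vector $u$.
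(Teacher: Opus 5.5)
Your complex-Schur data are correct: $\mathrm{Re}\,\lambda_i=|\langle u,v_i\rangle|^2$ with these summing to $1$, and $(\mathrm{Im}\,\lambda_i)$ is majorized by the spectrum of $-iK$. For $1\le p\le 2$ this even finishes the proof, since $(a_i^2+b_i^2)^{p/2}\le a_i^p+|b_i|^p\le a_i+|b_i|^p$. But the lemma is used with $p=4k$. For $p>2$, the inequality you leave as the main obstacle is false on the data you retain. Take $p=4$, $a=(1,0)$, $b=\mu=(c,-c)$; then $(1+c^2)^2+c^4=1+2c^2+2c^4>1+2c^4$. So no argument using only $a_i\ge 0$, $\sum_i a_i=1$ and $b\prec\mu$ can succeed. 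Writing $\lambda_i=a_i+ib_i$ lets the rank-one and skew contributions stack on the same eigenvalue, and for $p>2$ that overshoots. The singular-value route you dropped also genuinely fails, not just your particular estimate: for $K=\begin{bmatrix}0&c\\-c&0\end{bmatrix}$ and $u=e_1$ one gets $\sum_i\sigma_i(M)^4=1+4c^2+2c^4$.

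The missing idea is that, for a non-real eigenvalue, the rank-one part contributes nothing to the modulus. Pass to the real Schur form $Q^TMQ=L+vv^T$, where $L=Q^TKQ$ is skew-symmetric.

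For a $1\times1$ diagonal block, $L_{ii}=0$ forces the block to equal $v_i^2$. Summing over these blocks, the real eigenvalues satisfy $\sum|\lambda|^p=\sum v_i^{2p}\le\|v\|^2=1$.

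A $2\times2$ diagonal block has the form $\begin{bmatrix}a^2&c+ab\\-c+ab&b^2\end{bmatrix}$. Its determinant is exactly $c^2$, so the conjugate pair it carries has modulus $|c|$ regardless of $a,b$. Let $x,y$ be the corresponding columns of $Q$ and set $z,w=(x\pm iy)/\sqrt2$; these are orthonormal across all blocks. Then $\langle iKz,z\rangle=-c$ and $\langle iKw,w\rangle=c$. The same convexity step you already use now gives $\sum_{\lambda\notin\mathbb R}|\lambda|^p\le\sum_j|\mu_j|^p$.

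Thus the budget $1$ pays only for the real eigenvalues, and the skew spectrum pays only for the non-real ones. Your remark about conjugate pairs points in this direction. Without the determinant identity or an equivalent, though, the case $p>2$ is not proved, and the fallback perturbation argument in $u$ is too unspecific to fill the gap.
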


\begin{proof} 
We write \(M\) using its real Schur decomposition,
namely \(T=Q^T M Q\) for some real orthogonal matrix \(Q\) and an upper
quasi-triangular matrix \(T\) such that
\[T=\begin{bmatrix}T_{11} & T_{12} & \cdots & T_{1m}\\
0 &T_{22} & \cdots & T_{2m}\\
\vdots & \ddots & \ddots & \vdots\\
0 & \cdots & 0 & T_{mm}\end{bmatrix},\]such that the diagonal blocks \(T_{ii}\) are either
\(1 \times 1\) or \(2 \times 2\) matrices, corresponding to the real
eigenvalues and the conjugate pairs of eigenvalues of $M$ respectively.

Now let \(v=Q^T u\),  which is still a unit vector since $v^Tv=u^T QQ^T u=u^Tu=1$. We also define \(L=Q^TKQ\). It is not hard to see that $L$ is skew-symmetric as \(K\) is, since $L^T=Q^TK^TQ=-Q^TKQ=-L$. Therefore the diagonal entries of
\(L\) are zeros. Furthermore,
\[T=Q^TMQ=Q^T(K+P)Q=Q^T K Q +(Q^T u)(Q^T u)^T=L+vv^T.\]Therefore if we
write \(v^T=(v_1^T, \cdots, v_m^T)\) according to the blocks in this real Schur
decomposition of \(M\), then we can write \(L_{ij}=T_{ij}-v_i v_j^T\) to form the block decomposition for $L$.
There are two cases: if \(T_{ii}\) is a \(1 \times 1\) block, then $L_{ii}=0$ implies that as numbers, \(T_{ii}=v_i^2\), which equals the real eigenvalue it
corresponds to. This implies
\[\sum_{\lambda_i \in \mathbb{R}} |\lambda_i|^p = \sum_{i: \dim(T_{ii})=1} v_i^{2p} \le \sum_{i=1}^m \|v_i\|^{2} = 1.\]
So it remains to prove
\(\sum_{\lambda_i \not\in \mathbb{R}} |\lambda_i|^p \le \sum_{j=1}^n |\mu_j|^p\).
Without loss of generality, we may assume that these non-real eigenvalues of \(M\) are conjugate pairs
\(\{\lambda_j, \overline{\lambda_j}\}\) for \(j=1, \cdots, t\),
corresponding to $2 \times 2$ blocks \(T_{jj}\) of \(T\). And the
corresponding orthonormal real vectors from $Q$ are \(x_j, y_j\). It is not hard check that if we define \(z_j=(x_j+iy_j)/\sqrt{2}\) and
\(w_j=(x_j-iy_j)/\sqrt{2},\) then \(\{z_j\} \cup \{w_j\}\) are orthonormal
in \(\mathbb{C}^n\).

Since \(T_{jj}\) is a \(2 \times 2\) block, then assuming
\(L_{jj}=\begin{bmatrix} 0&c_{j}\\-c_{j}&0 \end{bmatrix}\), and
\(v_j=\begin{bmatrix} a_j\\b_j\end{bmatrix}\) with \(a_j=x_j^T u\),
\(b_j=y_j^T u\), then
\(T_{jj}=\begin{bmatrix} a_j^2 & c_j+a_jb_j\\-c_j+a_jb_j & b_j^2\end{bmatrix}\).
Its determinant is equal to \(c_j^2\), therefore the two conjugate
eigenvalues it correspond to both have absolute value \(|c_j|\). 

Set \(H=iK\). Since $K$ is skew-symmetric, \(H\) becomes Hermitian. Using the skew-symmetry of \(K\), we also
have \(x_j^T Kx_j=y_j^T K y_j=0\), and thus
\[\langle Hz_j, z_j \rangle=\frac{1}{2} (x_j^T (iK)(iy_j)-iy_j^T(iK)x_j)=\frac{1}{2}(-x_j^T K y_j+y_j^T K x_j)=-c_j.\]\[\langle Hw_j, w_j \rangle=\frac{1}{2}(x_j^T K y_j-y_j^T K x_j)=c_j.\]We
know \(H=iK\) is Hermitian and we assume that the eigenvalues of $K$ are $\{-i\mu_j\}$, so the eigenvalues of $H$ are real numbers $\{\mu_j\}_{j=1, \cdots, n}$, we can choose an orthonormal basis
\(\{e_1, \cdots, e_n\}\) such that \(He_r=\mu_r e_r\) for
\(\mu_r \in \mathbb{R}\), and \(e_i \in \mathbb{C}^n\). Now for a unit
vector \(z=\sum_r \alpha_r e_r\), we have \(\sum_r |\alpha_r|^2=1\), and
\(\langle Hz, z\rangle=\sum_r \mu_r |\alpha_r|^2\).
Therefore,\[|\langle Hz, z\rangle|^p =\left|\sum_r \mu_r|\alpha_r|^2 \right|^p \le \sum_r |\mu_r|^p |\alpha_r|^2=\sum_r |\mu_r|^p |\langle z, e_r\rangle|^2.\]Applying
this inequality to \(z_j\) and \(w_j\). We have
\[2\sum_{j=1}^t |\lambda_j|^p = \sum_{j=1}^t (|\langle Hz_j, z_j\rangle|^p+|\langle Hw_j, w_j\rangle|^p) \le \sum_r |\mu_r|^p \sum_{j=1}^t (|\langle z_j, e_r\rangle|^2+|\langle w_j, e_r\rangle|^2).\]Since
\(\{z_j\} \cup \{w_j\}\) are orthonormal, completing them to an
orthonormal basis gives
\[\sum_{j=1}^t \bigg(|\langle z_j, e_r\rangle|^2 + |\langle w_j, e_r\rangle|^2\bigg) \le |e_r|^2=1.\]Therefore
\(2\sum_{j=1}^t |\lambda_j|^p \le \sum_{r=1}^n |\mu_r|^p\). This immediately gives
\(\sum_{\lambda_j \not\in \mathbb{R}}|\lambda_j|^p \le \sum_{j=1}^n |\mu_j|^p\),
completing the proof of the lemma.
\end{proof}

Now we are ready to prove the main result in this section.\\

\noindent \textbf{Proof of Theorem \ref{thm_non_negative}:}\smallskip
\begin{proof} We take the unit vector
\(u=\frac{1}{\sqrt{n}} \vec{1}\), then \(P=uu^T=\frac{1}{n} J\). Let
\(K=\frac{1}{n} A\) and \(M=K+P\). The (possibly non-real) eigenvalues
of \(M\) are denoted by \(\lambda_1, \cdots, \lambda_n \in \mathbb{C}\). The eigenvalues of $K$ are denoted by $\{-i\mu_j\}$ with $\mu_j \in \mathbb{R}$. Then the eigenvalues of $A$ are $\{-in\mu_j\}$ and thus we have 
$$\Tr[A^{4k}]=n^{4k} \left( \sum_{j=1}^n (-i\mu_j)^{4k}\right)=n^{4k}\left( \sum_{j=1}^n \mu_j^{4k}\right).$$
Setting
\(p=4k\) in Lemma \ref{lem_lambda_mu}, we have
\[\sum_{i=1}^n |\lambda_i|^{4k} \le 1+\sum_{j=1}^n |\mu_j|^{4k}=1+\frac{\Tr[A^{4k}]}{n^{4k}}.\]
On the other hand, since for conjugate pair
\(\lambda, \overline{\lambda}\), we have
\(\lambda^{4k}+\overline{\lambda}^{4k} \le 2|\lambda|^{4k}\), therefore
\[\frac{\Tr[(A+J)^{4k}]}{n^{4k}}=\Tr[M^{4k}]=\sum_{i=1}^n \lambda_i^{4k} \le \sum_{i=1}^n |\lambda_i|^{4k} \le 1+\frac{\Tr[A^{4k}]}{n^{4k}}.\]
Noting that \(\Tr[J^{4k}]=n^{4k}\). This completes the proof.
\end{proof}

\section{Maximizing the trace of the matrix power}

We start by introducing some notation to be used throughout this section. For \(x\in\mathbb R^n\), let
\[
x^*=(x_1^*,\dots,x_n^*),\qquad  
\] be the non-increasing rearrangement of \((|x_1|,\dots,|x_n|)\).

For two non-negative non-increasing vectors \(u,v\), we write \(u\prec v\) or say $v$ weakly majorizes $u$ if
\(\sum_{j=1}^r u_j\le \sum_{j=1}^r v_j\) for every \(1\le r\le n\). A function $f$ is strictly convex if and only if for all $0<\alpha<1$ and $x_1<x_2$,
\[
    f(\alpha x_1+(1-\alpha)x_2)<\alpha f(x_1)+(1-\alpha)f(x_2).
\]
We have the following variant of Karamata's inequality by Tomic \cite{Tomic1949} and Weyl \cite{Weyl1949} about weakly majorization and strictly convex functions:
\begin{proposition}[{Karamata's inequality, see also
  \protect\cite[Chapter~3, Theorems~A.8.a and~C.1.a]{Marshall1979}}]\label{prop_karamata}
    If a function $f$ is strictly increasing and strictly convex on $[0,\infty)$, then for every pair of non-negative non-increasing vectors $u\prec v$, we have
    \[
        f(u_1)+\dots+f(u_n) \le f(v_1)+\dots+f(v_n),
    \]
    where the equality holds if and only if $u=v$.
\end{proposition}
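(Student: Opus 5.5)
The plan is the classical Abel-summation proof of Karamata's inequality, adapted to weak majorization; monotonicity of $f$ will compensate for the missing equality of the total sums. For each $1\le j\le n$ define the divided difference
\[
c_j=\begin{cases}\dfrac{f(v_j)-f(u_j)}{v_j-u_j}, & u_j\ne v_j,\\[2mm] f'_+(u_j), & u_j=v_j,\end{cases}
\]
where $f'_+$ denotes the right derivative, which exists because $f$ is convex. Then $f(v_j)-f(u_j)=c_j(v_j-u_j)$ for every $j$.

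\textbf{Step 1: the slopes are ordered.} Since $u$ and $v$ are both non-increasing, the pair $\{u_j,v_j\}$ lies weakly to the right of $\{u_{j+1},v_{j+1}\}$ coordinatewise. The standard three-chord lemma for convex functions says that divided differences are non-decreasing in each endpoint. Hence $c_1\ge c_2\ge\cdots\ge c_n$. Moreover every $c_j\ge 0$: a chord slope of a strictly increasing function is positive, and a right derivative of an increasing function is non-negative.

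\textbf{Step 2: Abel summation.} Put $D_r=\sum_{j\le r}(v_j-u_j)$, so $D_r\ge 0$ for all $r$ by the hypothesis $u\prec v$, and set $D_0=0$. Then
\[
\sum_{j=1}^n\bigl(f(v_j)-f(u_j)\bigr)=\sum_{j=1}^n c_j(D_j-D_{j-1})=\sum_{r=1}^{n-1}(c_r-c_{r+1})D_r+c_nD_n .
\]
Every term on the right is non-negative, which proves the inequality.

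\textbf{Step 3: equality.} This is where I expect the real work. Suppose $u\ne v$; then not all $D_r$ vanish. Let $s$ be the largest index with $D_s>0$.

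If $s<n$, then $D_{s+1}\le D_s$ while $D_{s+1}\ge 0$, and $D_{s+2}=\cdots=0$. Tracking this forces $v_{s+1}<u_{s+1}$. At index $s$ we must have $v_s>u_s$, or else reach a strict inequality somewhere earlier. I would then use strict convexity to show $c_s>c_{s+1}$. Concretely, the interval $[u_{s+1},v_{s+1}]$ (with endpoints in the appropriate order) sits to the left of $[u_s,v_s]$ and is not the same degenerate point. Strict convexity then makes the chord slopes strictly increasing whenever the two intervals do not coincide. This gives a strictly positive term $(c_s-c_{s+1})D_s>0$.

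If $s=n$, I need $c_n>0$. This is automatic when $u_n\ne v_n$, since it is a chord slope of a strictly increasing function. It also holds when $u_n=v_n>0$, because a strictly increasing convex function has positive right derivative at any interior point. The only problematic case is $u_n=v_n=0$ with $f'_+(0)=0$. There I would instead move to the last index $t$ where $(u_t,v_t)\ne(0,0)$ and repeat the argument on the truncated vectors. This works because trailing zero coordinates contribute nothing to either side or to the $D_r$.

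Making this case analysis airtight is the delicate part. The inequality itself is immediate once Steps 1 and 2 are in place; alternatively, one can simply cite the strict form in \cite{Marshall1979}.
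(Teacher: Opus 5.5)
The paper gives no proof of this proposition. It simply quotes the weak-majorization form of Karamata's inequality (Tomic, Weyl) and its strict version from Marshall--Olkin, so your Abel-summation argument is a genuinely different, self-contained route. Steps 1 and 2 are correct as written. With the convention $c_j=f'_+(u_j)$ on the diagonal, the slope is still non-decreasing in each coordinate of the ordered pair $(u_j,v_j)$, so $c_1\ge\cdots\ge c_n\ge 0$ and every Abel term is non-negative. The citation buys brevity; your route buys an explicit equality analysis. That matters here because the paper applies the proposition with $f(t)=t^2$, where $f'_+(0)=0$, so the degenerate situation you worry about really arises.

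Step 3 needs repair. The assertion that $v_s>u_s$ at the largest index $s<n$ with $D_s>0$ is false. Take $v=(5,1,1)$ and $u=(3,2,2)$. Then $D=(2,1,0)$, $s=2$, $v_2<u_2$, and $(u_2,v_2)=(u_3,v_3)$, so $c_2=c_3$ and the term at $r=s$ vanishes; the strict term sits at $r=1$.

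The tool you name is the right one: under strict convexity the extended slope is strictly increasing in each coordinate, so $c_r=c_{r+1}$ forces $(u_r,v_r)=(u_{r+1},v_{r+1})$. What is missing is an organizing principle, and a maximal run supplies it. Pick $a\le b$ with $D_{a-1}=0$, $D_r>0$ for $a\le r\le b$, and either $b=n$ or $D_{b+1}=0$. If all Abel terms vanish, the pairs $(u_r,v_r)$ are constant for $a\le r\le \min(b+1,n)$. Hence the increments $v_r-u_r=D_r-D_{r-1}$ are also constant there, and the increment at $r=a$ is $D_a>0$.
\begin{itemize}
\item If $b<n$, the increment at $b+1$ is $-D_b<0$, a contradiction.
\item If $b=n$, then $v_n-u_n>0$, so $c_n$ is an honest chord slope of a strictly increasing function. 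Thus $c_n>0$ and $c_nD_n>0$, again a contradiction.
\end{itemize}
This disposes of the case $u_n=v_n=0$, $f'_+(0)=0$ without any truncation step.
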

In particular, $f(t)=t^2$ is strictly increasing and strictly convex on $[0,\infty)$, and hence $||u||_2^2\le||v||_2^2$ for every pair of non-negative non-increasing vectors $u\prec v$, where the equality holds if and only if $u=v$.

Define the symmetric matrix \(K=K_n\) by\\
\[
K_{ij}=  
\begin{cases}  
1,&i+j\le n,\\
0,&i+j=n+1,\\
(-1)^{i+j-n-2},&i+j\ge n+2.  
\end{cases}
\] For example when $n=6$,\\
\[
K_6=  
\begin{bmatrix}  
1&1&1&1&1&0\\
1&1&1&1&0&1\\
1&1&1&0&1&-1\\
1&1&0&1&-1&1\\
1&0&1&-1&1&-1\\
0&1&-1&1&-1&1  
\end{bmatrix}.
\] 
Later, we will see that this matrix is closely related to the matrix $T_n$ we defined earlier in Theorem \ref{thm_trace}. In fact, one can be obtained from the other by permuting the rows and columns, and flipping the signs of entire rows or columns. 

The first lemma in this section shows that as a linear operator, $K_n$ preserves weak majorization $\prec$.

\begin{lemma}\label{lem_monotonicity} 
If \(u \prec v\), then \(K_n u \prec K_n v\). Furthermore, if $K_nu=K_nv$, then $u=v$.
\end{lemma}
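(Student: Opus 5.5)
The plan is to read $u\prec v$ as a statement about non-negative non-increasing vectors, as in Proposition~\ref{prop_karamata}. I will first check that $K_nu$ is again non-negative and non-increasing for such $u$, so that $K_nu\prec K_nv$ is a genuine comparison of partial sums. I will then write each partial sum of $K_nu$ as a non-negative combination of the partial sums $S_s(u)=\sum_{j\le s}u_j$. The one step that needs a careful entry-by-entry count is the monotonicity of the column-sum coefficients below.

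\emph{Step 1: $K_nu$ is non-negative and non-increasing.} From the definition, row $i$ of $K_n$ has ones in columns $1,\dots,n-i$, a zero in column $n+1-i$, and then alternating signs $+,-,+,\dots$ starting at column $n+2-i$. So
\[
(K_nu)_i=S_{n-i}(u)+\tau_i,\qquad \tau_i=u_{n+2-i}-u_{n+3-i}+\cdots .
\]
Since $u$ is non-negative and non-increasing, each alternating tail satisfies $0\le\tau_i\le u_{n+2-i}$. The tails are linked by $\tau_{i+1}=u_{n+1-i}-\tau_i$, which gives
\[
(K_nu)_i-(K_nu)_{i+1}=(u_{n-i}-u_{n+1-i})+2\tau_i\ge 0 .
\]
Hence $K_nu$ is non-negative and non-increasing, and $(K_nu)^*=K_nu$.

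\emph{Step 2: Abel summation.} Fix $r$ and let $c_j=\sum_{i\le r}(K_n)_{ij}$ be the $r$-th partial column sums. Then
\[
\sum_{i\le r}(K_nu)_i=\sum_j c_ju_j=\sum_{s=1}^n (c_s-c_{s+1})\,S_s(u),\qquad c_{n+1}:=0.
\]
It therefore suffices to show $c_s\ge c_{s+1}$ for all $s\le n-1$, and $c_n\ge 0$.

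Reading down column $j$ gives $c_j=\min(r,n-j)+\varepsilon_j$, where $\varepsilon_j\in\{0,1\}$ is the leftover of the alternating part and is present only when $r\ge n+2-j$. I then check three ranges of $r$:
\begin{itemize}
\item If $r\le n-j-1$, then $c_j=c_{j+1}=r$.
\item If $r=n-j$, then $c_j=r$ and $c_{j+1}=r-1$, since row $r$ hits the zero of column $j+1$. The difference is $1$.
\item If $r\ge n+1-j$, then $c_j-c_{j+1}=1+\varepsilon_j-\varepsilon_{j+1}\ge 0$.
\end{itemize}
Also $c_n\ge 0$, because column $n$ reads $0,1,-1,1,\dots$. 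So all weights $w_{r,s}=c_s-c_{s+1}$ (with $w_{r,n}=c_n$) are non-negative. Since $S_s(u)\le S_s(v)$ for every $s$, summing gives $\sum_{i\le r}(K_nu)_i\le\sum_{i\le r}(K_nv)_i$ for all $r$, that is, $K_nu\prec K_nv$.

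\emph{Step 3: equality.} Suppose $K_nu=K_nv$. Then every inequality in Step 2 is tight, so $S_s(u)=S_s(v)$ whenever some $w_{r,s}>0$. For $s\le n-1$, taking $r=n-s$ gives $w_{r,s}=1$. For $s=n$, taking $r=2$ gives $w_{2,n}=c_n=1$. Hence all partial sums of $u$ and $v$ agree, and so $u=v$.

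This requires $n\ge2$. For $n=1$ we have $K_1=[0]$, and the uniqueness claim fails, so I would state that case as excluded or trivial. Note also that $K_n$ itself may be singular (for instance, it is related to the odd-order skew-symmetric $T_n$), so uniqueness must come from the majorization constraint as above rather than from invertibility of $K_n$.
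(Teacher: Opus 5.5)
Your proposal is correct and follows essentially the same route as the paper. Both arguments use Abel summation against the column partial sums $w_{j,r}=\sum_{i\le r}(K_n)_{ij}$, show these are non-negative and non-increasing in $j$, and get the equality case from the unit weights at $r=n-s$ and at $(r,s)=(2,n)$. Beyond the paper, your Step 1 proves that $K_nu$ is itself non-negative and non-increasing, which the paper's definition of $\prec$ requires but its proof leaves implicit. Your $n=1$ caveat is also right: since $K_1=[0]$, the paper's remark ``if $n=1$ then clearly $u=v$'' is false, though this is harmless for the later results, where $n=1$ is trivial.
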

\begin{proof}
From the definition of weak majorization $u \prec v$, for every \(1 \le r \le n\), we have
\(\sum_{i=1}^r u_i \le \sum_{i=1}^r v_i\). Now for any \(r\),
\[\sum_{i=1}^r (K_nu)_i=\sum_{i=1}^r \sum_{j=1}^n (K_n)_{ij}u_j.\] 
We claim that the coefficient of \(u_j\), which is
\(\sum_{i=1}^r (K_n)_{ij}=:w_{j,r}\) is non-increasing in \(j\) for fixed $r$ and non-negative. We also let \(w_{n+1,r}=0\). Suppose true, then we have
\[\sum_{i=1}^r (K_n u)_i = \sum_{j=1}^n w_{j,r} u_j=\sum_{j=1}^n (w_{j,r}-w_{j+1,r})(\sum_{i=1}^j u_i) \le \sum_{j=1}^n (w_{j,r}-w_{j+1,r})(\sum_{i=1}^j v_i)= \sum_{j=1}^n w_{j,r} v_j = \sum_{i=1}^r (K_n v)_i.\]
Then the conclusion follows. 

To see the monotonicity and non-negativity of $w_{j,r}$, the calculation gives
\[w_{j,r}=
\begin{cases}
r,&r+j\le n,\\
n-j,&r+j\ge n+1\text{ and }r+j\not\equiv n\pmod 2,\\
n-j+1,&r+j\ge n+1\text{ and }r+j\equiv n\pmod 2.
\end{cases}\]
More explicitly, for fixed \(r\), the sequence has the form
\[
(w_{1,r},\ldots,w_{n,r})
=
(\underbrace{r,\ldots,r}_{n-r\text{ times}},
r-1,r-1,r-3,r-3,r-5,r-5,\ldots),
\]
which is clearly non-increasing and non-negative. 

Now suppose $K_nu=K_nv$. If $n=1$ then clearly $u=v$. For $n\ge 2$, the arguments above force $\sum_{i=1}^ju_i=\sum_{i=1}^jv_i$ whenever there is an $r$ with $w_{j,r}>w_{j+1,r}$. Notice that $w_{n-r,r}-w_{n-r+1,r}=r-(r-1)=1$ for $r=1,\dots,n$ and $w_{n,2}-w_{n+1,2}=1-0=1$, hence $\sum_{i=1}^ju_i=\sum_{i=1}^jv_i$ for $j=1,\dots,n$ and then $u=v$.
\end{proof}

The next lemma essentially relates an arbitrary tournament to the transitive tournament.

\begin{lemma}\label{lem_majorize}
For every skew-symmetric $n \times n$ matrix \(A\) with off-diagonal
entries in \(\{-1,1\}\), and every \(x \in \mathbb{R}^n\), we have\\
\[
(Ax)^*\prec K_n x^*.
\]
\end{lemma}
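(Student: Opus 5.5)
We fix $r\in\{1,\dots,n\}$ and compare the $r$-th partial sums. By the formula for $w_{j,r}$ in the proof of Lemma~\ref{lem_monotonicity}, the $r$-th partial sum of $K_nx^*$ is $\sum_j w_{j,r}x^*_j$. So it suffices to show that the sum of the $r$ largest values $|(Ax)_i|$ is at most $\sum_j w_{j,r}x^*_j$.

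\textbf{Step 1 (reduction to a coefficient vector).} We write the sum of the $r$ largest $|(Ax)_i|$ as $\sum_{i\in S}\varepsilon_i(Ax)_i$ for a suitable set $S$ with $|S|=r$ and signs $\varepsilon_i\in\{\pm1\}$. This equals $\sum_j c_jx_j$ with
\[
c_j=\sum_{i\in S}\varepsilon_iA_{ij}.
\]
Hence it is at most $\sum_j|c_j||x_j|$. By the rearrangement inequality this is at most $\sum_j c^*_jx^*_j$.

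\textbf{Step 2 (Abel summation).} Since $x^*$ is non-negative and non-increasing, the same summation by parts as in Lemma~\ref{lem_monotonicity} reduces the claim to a partial-sum inequality. Writing $W_m=\sum_{j\le m}w_{j,r}$, we need
\[
\sum_{j\le m}c^*_j\le W_m \qquad\text{for every } m.
\]
Recall that the sequence $(w_{j,r})_j$ consists of $n-r$ copies of $r$ followed by $r-1,r-1,r-3,r-3,\dots$.

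\textbf{Step 3 (entrywise bounds on $c$).} For $j\notin S$, trivially $|c_j|\le r$. For $j\in S$, we have $c_j=\varepsilon_j s_j$, where $s_j$ is the $j$-th row sum (up to a global sign) of the $r\times r$ matrix
\[
B=D A_{S} D,\qquad D=\diag(\varepsilon_i)_{i\in S}.
\]
The matrix $B$ is again skew-symmetric with off-diagonal entries $\pm1$.

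\textbf{Step 4 (the key estimate).} We claim that for any $U\subseteq S$ with $|U|=m$,
\[
\sum_{j\in U}|s_j|\le 2\left[(r-1)+(r-3)+\cdots\right]_{\text{first } m \text{ terms of } r-1,r-1,r-3,r-3,\dots}.
\]
To prove it, split $U=P\cup N$ according to the sign of $s_j$, with $|P|=p$ and $|N|=q$. By skew-symmetry the entries of $B$ inside $P$ cancel in $\sum_{j\in P}s_j$, so $\sum_{P}s_j\le p(r-p)$. Likewise $-\sum_N s_j\le q(r-q)$. Maximizing $p(r-p)+q(r-q)$ subject to $p+q=m$ gives a near-balanced split. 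A direct computation shows the maximum equals $mr-m^2/2$ for even $m$, and $2(tr-t^2)+(r-2t-1)$ for $m=2t+1$. These are exactly the partial sums of $r-1,r-1,r-3,r-3,\dots$.

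\textbf{Step 5 (merging).} Any $m$ largest entries of $|c|$ consist of some $a\le n-r$ indices outside $S$ and $b=m-a$ indices in $S$. By Steps 3 and 4 their sum is at most
\[
ar+g(b),
\]
where $g(b)$ denotes the first $b$ partial sums of the pattern $r-1,r-1,r-3,\dots$. Every term of that pattern is at most $r$, and $g$ has non-increasing increments. Therefore $ar+g(b)\le W_m$, because $W_m$ takes as many $r$'s as possible first.

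I expect the main obstacle to be Step 4. That step is the tournament score-sequence-type estimate; the care needed is in making the cancellation within $P$ and within $N$ precise and in checking the odd case. The rest is routine bookkeeping.
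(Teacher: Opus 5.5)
Your proof is correct. It agrees with the paper's on the indices outside $S$: the top-$r$ sum is $\sum_{i\in S}\varepsilon_i(Ax)_i$, and each $x_j$ with $j\notin S$ gets a coefficient of absolute value at most $r$. It handles the indices inside $S$ by a different mechanism.

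\textbf{The paper's route.} It uses skew-symmetry to pair the internal terms as $a_{ij}(\varepsilon_i x_j-\varepsilon_j x_i)$. It bounds each pair by the triangle inequality by $|z_i-z_j|$ with $z_i=\varepsilon_i x_i$. It then evaluates $\sum_{i<j}|z_i-z_j|=\sum_i (r-2i+1)z'_i$ for the sorted $z'$. The coefficients $|r-2i+1|$ are exactly the pattern $r-1,r-1,r-3,r-3,\dots$, and one rearrangement against $x^*$ finishes. In effect, the paper picks the best orientation of $A_S$ for the given $x$, which is the transitive one ordered by $z$.

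\textbf{Your route.} You fix the coefficient vector $c$ and prove a statement that does not depend on $x$: $c^*$ is weakly majorized by $(w_{1,r},\dots,w_{n,r})$. This combines three pieces:
\begin{itemize}
\item A Landau-type score inequality. Split the row sums of the $\pm1$ skew-symmetric matrix $B$ by sign; the internal entries cancel within each part. Hence any $m$ absolute row sums total at most $\max_{p+q=m}\,[p(r-p)+q(r-q)]$.
\item Abel summation against $x^*$.
\item The merging bookkeeping of Step 5.
\end{itemize}

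\textbf{What each buys.} The paper's route is shorter: it needs no optimization over sign splits and no separate merging step. Yours isolates a clean combinatorial fact about signed score sequences. It also shows directly why the transitive tournament is extremal: its centered scores $\pm(r-1),\pm(r-3),\dots$ attain the bound.

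\textbf{Two small fixes.} In Step 4 the prefactor $2$ should be deleted. The bound you actually prove, and need in Step 5, is the sum $g(m)$ of the first $m$ terms of $r-1,r-1,r-3,r-3,\dots$ itself, as your formulas $mr-m^2/2$ and $2(tr-t^2)+(r-2t-1)$ confirm. Twice that quantity would be too weak for Step 5. Similarly, $g(b)$ should be described as the $b$-th partial sum, not ``the first $b$ partial sums.''
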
 

\begin{proof} Changing the signs of the coordinates of \(x\) would not
alter \(x^*\). So we may always assume without loss of generality that
all \(x_i \ge 0\) and they are already in a non-increasing order (thus
\(x^*=x\)). If not, we may conjugate \(A\) by the signed permutation
matrix $P$ that gives $x=Px^*$, and we still end up with \(A'=P^T AP\) that is skew-symmetric and has
\(\{-1,1\}\) off-diagonal entries. And 
$$(Ax)^*=(P^TAx)^*=(P^TAPx^*)^*=(A'x^*)^* \prec K_n x^*.$$
The sum of the \(r\) largest absolute values of the coordinates of \(Ax\) is
equal to
\[\max_{\substack{S\subset[n],|S|=r, \varepsilon_i\in\{\pm1\}}}   \sum_{i\in S}\varepsilon_i(Ax)_i.\]
By the skew-symmetry of \(A\) we have
\[\sum_{i \in S} \varepsilon_i (Ax)_i=\sum_{i=1}^n 1_S(i) \varepsilon_i \sum_{j=1}^n a_{ij}x_j = \sum_{i<j} a_{ij}(\varepsilon_i 1_S(i)x_j-\varepsilon_j 1_S(j)x_i).\]
We break the sum on the right hand side into three parts according to whether \(i \in S\) and
\(j \in S\):
\[\sum_{i<j, i \in S, j \in S} a_{ij}(\varepsilon_i 1_S(i)x_j-\varepsilon_j 1_S(j)x_i)+\sum_{i<j, i \in S, j \not\in S} a_{ij}(\varepsilon_i 1_S(i)x_j-\varepsilon_j 1_S(j)x_i)+\sum_{i<j, i \not\in S, j \in S} a_{ij}(\varepsilon_i 1_S(i)x_j-\varepsilon_j 1_S(j)x_i).\]
Note that by exchanging \(i, j\), the third term becomes
\[\sum_{i<j, i \not\in S, j \in S} a_{ij}(-\varepsilon_j x_i)=\sum_{j<i, i \in S, j \not\in S} a_{ji}(-\varepsilon x_j).\]
Therefore in the sum 
\(\sum_{i<j, i \in S, j \not \in S} a_{ij}(\varepsilon_i x_j)+\sum_{i<j, i \not\in S, j \in S} a_{ij}(-\varepsilon_j x_i)\) of the second and third term,
the variable \(x_j\) appears for at most \(|S|=r\) times. Using triangle
inequalities, we get
\[\sum_{i \in S} \varepsilon_i (Ax)_i \le r \sum_{j \not\in S} x_j + \sum_{i<j, i,j \in S} |\varepsilon_i x_j-\varepsilon_j x_i|.\]
Let \(z_i=\varepsilon_i x_i\) then
\(|\varepsilon_i x_j-\varepsilon_j x_i|=|z_i-z_j|\). Sorting \(z_i\) in
descending order as \(\{z'_i\}\) for \(i=1, \cdots, r\). Then
\[\sum_{i<j, i,j \in S} |\varepsilon_i x_j-\varepsilon_j x_i|=\sum_{i<j} (z_i'-z_j')=\sum_{i=1}^r (r-2i+1) z'_i \le \sum_{i=1}^r c_{r,i} x'_i, \]where
\(\{c_{r,i}\}\) is the sequence \(\{|r-2i+1|\}_{i=1,\cdots, r}\) sorted in non-increasing
order, and \(\{x'_i\}_{i=1,\cdots,r}\) are \(\{x_i\}_{i \in S}\) sorted in non-increasing order. Here the last inequality follows from the monotonicity assumption
on \(\{x_i\}\). It is easy to see that \(|c_{r,i}| \le r\), therefore
\[\sum_{i \in S} \varepsilon_i (Ax)_i \le r \sum_{j=1}^{n-r} x_j + \sum_{i=1}^r c_{r,i} x_{n-r+i}.\]
Noting that the sequence of coefficients here is the same as $w_{j,r}$ in Lemma \ref{lem_monotonicity}, and hence the right hand side is exactly \(\sum_{i=1}^r (K_n x)_i\), which completes the proof of the lemma.
\end{proof}

Using these two lemmas repeatedly gives the following corollary.

\begin{corollary} \label{cor_majorize}
Given a sequence \(A_1, \cdots, A_m\) of \(n \times n\) skew-symmetric
matrices with off-diagonal entries from \(\{-1,1\}\), we have
\[(A_1A_2\cdots A_m x)^* \prec K_n^m x^*.\]
Furthermore, if $(A_1A_2\cdots A_m x)^* = K_n^m x^*$, then $(A_jA_{j+1}\cdots A_m x)^* = K_n^{m-j+1} x^*$ for any $j$.
\end{corollary}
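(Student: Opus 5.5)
We argue by induction on $m$, combining Lemma \ref{lem_majorize} with Lemma \ref{lem_monotonicity}. Note first that $K_n^m x^*$ is a non-negative non-increasing vector for every $m$. Indeed, $x^*$ is non-negative and non-increasing, and $K_n$ maps such vectors to such vectors. To see this, take a non-negative non-increasing $y$. Each row of $K_n$ pairs the entries of $y$ with coefficients in $\{1,0,-1\}$. Row $i$ has $1$'s in columns $1,\dots,n-i$, then a $0$, then alternating signs starting with $+1$. So $(K_ny)_i=\sum_{j\le n-i}y_j+(y_{n-i+2}-y_{n-i+3}+\cdots)$. The alternating tail of a non-increasing non-negative sequence is non-negative, so $(K_ny)_i\ge 0$. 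For monotonicity in $i$, I would compare $(K_ny)_i-(K_ny)_{i+1}=y_{n-i}-y_{n-i+1}+(\text{alternating differences})$ and check that it is non-negative. Alternatively, one can observe from the proof of Lemma \ref{lem_majorize} with $A=T_n$ that $K_ny=(T_nQy)^*$ up to the identification mentioned in the paper. I expect this to be a routine check. It is needed because the relation $\prec$ and Lemma \ref{lem_monotonicity} are only stated for non-negative non-increasing vectors.

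For the inequality, set $y_j=A_jA_{j+1}\cdots A_m x$ and $y_{m+1}=x$. The claim is $y_j^*\prec K_n^{m-j+1}x^*$, and we prove it by downward induction on $j$. The base case $j=m$ is Lemma \ref{lem_majorize}. For the step, Lemma \ref{lem_majorize} applied to $A_j$ and $y_{j+1}$ gives $y_j^*=(A_jy_{j+1})^*\prec K_n y_{j+1}^*$. The induction hypothesis gives $y_{j+1}^*\prec K_n^{m-j}x^*$, and applying Lemma \ref{lem_monotonicity} to it gives $K_ny_{j+1}^*\prec K_n^{m-j+1}x^*$. Weak majorization is transitive, since it is a chain of partial-sum inequalities, so $y_j^*\prec K_n^{m-j+1}x^*$.

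For the equality statement, assume $y_1^*=K_n^mx^*$. We show by forward induction on $j$ that $y_j^*=K_n^{m-j+1}x^*$. Suppose the equality holds at level $j$. The chain $y_j^*\prec K_ny_{j+1}^*\prec K_n^{m-j+1}x^*$ has equal endpoints, so every partial sum in the chain is equal, and hence $K_ny_{j+1}^*=K_n^{m-j+1}x^*=K_n(K_n^{m-j}x^*)$. Both $y_{j+1}^*$ and $K_n^{m-j}x^*$ are non-negative and non-increasing. The injectivity part of Lemma \ref{lem_monotonicity} therefore gives $y_{j+1}^*=K_n^{m-j}x^*$, which is the statement at level $j+1$. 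The only step needing care is the preservation of non-negativity and monotonicity under $K_n$ from the first paragraph; everything else is formal.
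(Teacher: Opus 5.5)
Your proof is correct and is essentially the paper's argument. Lemma~\ref{lem_majorize} handles one factor, Lemma~\ref{lem_monotonicity} pushes the inductive majorization through $K_n$, and its equality clause carries equality down the chain; your extra check that $K_n$ preserves non-negative non-increasing vectors, via $(K_ny)_i-(K_ny)_{i+1}=(y_{n-i}-y_{n-i+1})+2(y_{n-i+2}-y_{n-i+3}+\cdots)\ge 0$, makes explicit a point the paper leaves implicit.

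One small correction: the equality clause of Lemma~\ref{lem_monotonicity} is not plain injectivity, since $K_n$ is singular for odd $n$ (e.g.\ $K_3(1,-1,-1)^T=0$). What licenses it is the relation $y_{j+1}^*\prec K_n^{m-j}x^*$ you already established, not merely that both vectors are non-negative and non-increasing.
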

\begin{proof} 
We repeatedly apply the two lemmas before. The base case when \(m=1\) follows from
Lemma \ref{lem_majorize}. Suppose it works for \(m=t\), namely
\((A_2 \cdots A_{t+1} x)^* \prec K_n^t x^*\), then for \(m=t+1\), we set
\(u=A_2\cdots A_{t+1}x\) and \(v=K_n^t x^*\), and apply Lemma \ref{lem_majorize}
again for \(m=1\) and Lemma \ref{lem_monotonicity} for \(u^*, v\),  we get
\[(A_1 u)^* \prec K_n u^* = K_n(A_2\cdots A_{t+1}x)^* \prec K_n(K_n^t x^*)=K_n^{t+1} x^*,\]
where if $(A_1u)^*=K_n^{t+1} x^*$ then $K_nu^*=K_nv$ and $u^*=v$.
By induction this proves the corollary.
\end{proof}

We combine the lemmas above to show that $A=T_n$ gives the maximum value of $\Tr[A^{4k}]$ among all skew-symmetric matrices with $\{-1,1\}$-off-diagonal entries.\\

\noindent \textbf{Proof of Theorem \ref{thm_trace}:}\smallskip
\begin{proof} 
Let $e_j$ be the unit vector with the $j$-th coordinate equal to $1$. 
Using Corollary \ref{cor_majorize} for $m=2k$ and $A_i=A$, we have
\[
    (A^{2k} e_j)^* \prec K_n^{2k}(e_j)^* = K_n^{2k} e_1.
\]
Then applying Karamata's inequality (Proposition \ref{prop_karamata}) with $f(t)=t^2$ gives
\[
    (A^{4k})_{jj}=||(A^{2k}e_j)^*||^2_2\le||K_n^{2k}e_1||_2^2=(K_n^{4k})_{11},
\]
where the equality holds if and only if $(A^{2k} e_j)^* = K_n^{2k} e_1$ since $f(t)=t^2$ is strictly convex and strictly increasing when $t\ge0$. Furthermore from the equality condition in Corollary \ref{cor_majorize}, we know that equality holds if and only if
  \[(A^2e_j)^*=K_n^2 e_1.\]
Summing over all
\(j=1, \cdots, n\), we have
\[
    \Tr[A^{4k}] \le n\cdot (K_n^{4k})_{11}.
\]
Next we claim that the RHS is precisely \(\Tr[T_n^{4k}]\). To relate $K_n$ to $T_n$, let us see $K_6$ as an example. 
Let 
\[D=\operatorname{diag}(1,-1,1,-1,1,-1),
\qquad
R=
\begin{bmatrix}
0&0&0&0&0&1\\
0&0&0&0&1&0\\
0&0&0&1&0&0\\
0&0&1&0&0&0\\
0&1&0&0&0&0\\
1&0&0&0&0&0
\end{bmatrix}.\]
Recall that
\[
K_6=  
\begin{bmatrix}  
1&1&1&1&1&0\\
1&1&1&1&0&1\\
1&1&1&0&1&-1\\
1&1&0&1&-1&1\\
1&0&1&-1&1&-1\\
0&1&-1&1&-1&1  
\end{bmatrix}.
\] 
Multiplication by \(R\) reverses the columns:
\[
K_6R=\begin{bmatrix}  
0&1&1&1&1&1\\
1&0&1&1&1&1\\
-1&1&0&1&1&1\\
1&-1&1&0&1&1\\
-1&1&-1&1&0&1\\
1&-1&1&-1&1&0 
\end{bmatrix}.
\]
Multiplication by \(D\) flips the even-numbered rows:
\[DK_6R=
\begin{bmatrix}
0&1&1&1&1&1\\
-1&0&-1&-1&-1&-1\\
-1&1&0&1&1&1\\
-1&1&-1&0&-1&-1\\
-1&1&-1&1&0&1\\
-1&1&-1&1&-1&0
\end{bmatrix}.\]
One can count $1$'s in each column and see that the positions of $1$'s are already similar to $T_6$: just a permutation apart. Namely, let $P$ be the permutation matrix of $\sigma = (1,6,2,5,3,4)$, then $P^TDK_6RP=T_6$.

This process also works for general $K_n$. Let $R$ be the reversal matrix whose anti-diagonal entries are 1 and all other entries are zero, and \[D=\diag(1,-1,1,-1,\ldots), \qquad D_{ii}=(-1)^{i+1}.\] Then define the permutation $\sigma=(1,n,2,n-1,\dots)$, or explicitly,
\[
\sigma(i)=
\begin{cases}
\dfrac{i+1}{2},&i\ \text{odd},\\[2mm]
n+1-\dfrac{i}{2},&i\ \text{even}.
\end{cases}
\]
Let \(P\) be the permutation matrix corresponding to \(\sigma\), we consequently have
\[
P^TDK_nRP=T_n.
\]
Write $S=DP$ we have $T_n^2=-T_nT_n^T=-P^TDK_n^TK_nDP=-S^TK_n^2S$, and hence $T_n^{4k}=S^TK_n^{4k}S$ for all $k$. Notice that $Se_1=e_1$, thus $(T_n^{4k})_{11}=(Se_1)^TK_n^{4k}(Se_1)=(K_n^{4k})_{11}$.

It remains to show that \(T_n^{4k}\) has all the diagonal entries equal, then
this immediately implies $n \cdot (K_n^{4k})_{11}=n \cdot(T_n^{4k})_{11}=\Tr[T_n^{4k}]$.

For this last step, we define the signed cyclic permutation matrix \(C\) with \(C_{j, j+1}=1\) for \(j=1, \cdots, n-1\) and \(C_{n,1}=-1\). It is not hard to check that \(C\) and \(T_n\)
commute. So \(C\) and \(T_n^{4k}\) commute as well. For $i=1,\dots,n-1$, this implies
\[
    (T_n^{4k})_{ii}=e_i^TT_n^{4k}e_i=(Ce_i)^TT_n^{4k}(Ce_i)=(T_n^{4k})_{i+1,i+1},
\]
and hence $(T_n^{4k})_{11}=\dots=(T_n^{4k})_{nn}$ are equal, completing the proof.

Next we study the equality case. It is clear that $A=Q^TT_nQ$ for some signed permutation matrix $Q$ implies $\Tr[A^{4k}]=\Tr[(Q^TT_nQ)^{4k}]=\Tr[Q^TT_n^{4k}Q]=\Tr[T_n^{4k}]$. We shall see that this is the only case where the equality holds. Now suppose $\Tr[A^{4k}]=\Tr[T_n^{4k}]$. Then arguments at the beginning of this proof force $(A^{4k})_{jj}=(K_n^{4k})_{11}$ for all $j$ and hence $(A^{2} e_j)^* = K_n^{2} e_1$. Without loss of generality, we can assume $A_{1j}=1$ for $j=2,\dots,n$ by conjugating $A$ by some signed permutation matrix, i.e.,
\[
    A=\begin{bmatrix}
        0&\vec{1}\\
        -{\vec{1}}^T&B\\
    \end{bmatrix}
\]
for some skew-symmetric matrix $B$ with $\{-1,1\}$-off-diagonal entries. Then 
\[(A^2e_1)^*=(n-1,(B\vec{1})^*)=K_n^{2} e_1=(w_{1,n-1},\dots,w_{n,n-1})=(n-1,n-2,n-2,n-4,n-4\dots).\]
The only possible case to make the first $2$ entries of $(B\vec{1})^*$ to be $n-2$ is to have an all $1$ row and an all $-1$ row. Deleting these two rows and corresponding columns, by induction it is enough to check the case where $B$ is a $1\times 1$ or $2\times 2$ matrix, which is clearly $T_1$ or $\pm T_2$, and we are done.
\end{proof}

\section{Proof of the main theorem}

We begin this section by reducing the cycle counting problem to estimating the trace of matrix powers. We let $J$ be the $n \times n$ all-one matrix.

\begin{proposition}\label{prop_reduction}
    For every \(n\)-vertex tournament $T$, define a skew-symmetric \(n \times n\) matrix \(A\) with off-diagonal entries in \(\{-1, 1\}\) by setting $A_{ij}=1$ if there is an edge from the $i$-th vertex to the $j$-th vertex. Then the number of directed cycles of length $\ell$ in $T$ is equal to
    \[  
    \bigg(\frac{1}{n^{\ell}}\Tr[(A+J)^{\ell}]+O(n^{-1})\bigg) \frac{(\ell-1)!}{2^{\ell}}\binom{n}{\ell}.
    \]
\end{proposition}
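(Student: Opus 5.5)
Observe that $A+J$ has entries $2$ where $T$ has an edge $i\to j$, $0$ where $j\to i$, and $1$ on the diagonal. In other words, $A+J = 2M + I$, where $M$ is the $0/1$ adjacency matrix of $T$. The plan is to expand $\Tr[(2M+I)^\ell]$ and compare it with $\Tr[M^\ell]$, which counts closed directed walks of length $\ell$. Then we compare closed walks with genuine directed $\ell$-cycles.

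First I expand binomially. Since $M$ and $I$ commute,
\[
\Tr[(2M+I)^\ell]=\sum_{s=0}^{\ell}\binom{\ell}{s}2^{s}\Tr[M^{s}].
\]
The $s=\ell$ term is $2^\ell\Tr[M^\ell]$. Every entry of $M^s$ is at most $n^{s-1}$, so $\Tr[M^s]\le n^{s}$, and each term with $s<\ell$ is $O(n^{\ell-1})$. Hence
\[
\Tr[(A+J)^\ell]=2^\ell\Tr[M^\ell]+O(n^{\ell-1}).
\]

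Next I relate $\Tr[M^\ell]$ to the cycle count $N_\ell$. A closed walk $v_0v_1\cdots v_{\ell-1}v_0$ with all $v_i$ distinct is a directed $\ell$-cycle with a chosen starting point. Each cycle arises from exactly $\ell$ such walks, so these walks contribute $\ell N_\ell$. A closed walk with a repeated vertex is determined by at most $\ell-1$ distinct vertices plus a bounded amount of pattern information, so there are $O_\ell(n^{\ell-1})$ of them. Therefore
\[
\Tr[M^\ell]=\ell N_\ell+O(n^{\ell-1}).
\]

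Combining the two estimates,
\[
N_\ell=\frac{1}{\ell\,2^\ell}\Tr[(A+J)^\ell]+O(n^{\ell-1}).
\]
Finally,
\[
\frac{(\ell-1)!}{2^\ell}\binom{n}{\ell}=\frac{n^\ell}{\ell\,2^\ell}\bigl(1+O(n^{-1})\bigr),
\]
and $\Tr[(A+J)^\ell]/n^\ell$ is bounded, since it is at most $2^\ell$ by the entrywise bound. So we can write
\[
N_\ell=\Bigl(\tfrac{1}{n^\ell}\Tr[(A+J)^\ell]+O(n^{-1})\Bigr)\tfrac{(\ell-1)!}{2^\ell}\binom{n}{\ell},
\]
absorbing the $O(n^{\ell-1})$ error and the $1+O(n^{-1})$ factor into $O(n^{-1})$.

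No step is a real obstacle. The only care needed is bounding degenerate walks uniformly in $T$: fix the equality pattern among the $\ell$ positions (at most $\ell^\ell$ choices), and note there are at most $n^{\ell-1}$ vertex assignments for each pattern.
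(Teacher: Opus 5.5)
Your proof is correct and follows the paper's argument. You identify $A+J=2M+I$ with $M$ the $0/1$ adjacency matrix, match $\Tr[M^\ell]$ with $\ell$ times the cycle count up to $O(n^{\ell-1})$ degenerate closed walks, and absorb the identity shift into an $O(n^{\ell-1})$ error; your binomial expansion and final normalization step spell out what the paper leaves implicit. One harmless slip: the bound ``every entry of $M^s$ is at most $n^{s-1}$'' fails at $s=0$, where $\Tr[M^0]=n$, but this is still $O(n^{\ell-1})$ for every $\ell\ge 2$.
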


\begin{proof}
    Note that $B:=(A+J-I)/2$ is exactly the $\{0,1\}$-adjacency matrix of $T$. Therefore $\Tr(B^\ell)$ counts the number of closed directed walks of length $\ell$ with repetition.
    A genuine \(\ell\)-cycle is counted \(\ell\) times, once for each choice of starting vertex. Closed walks that visit the same vertex more than once contribute only \(O(n^{\ell-1})\), since such a walk uses at most \(\ell-1\) distinct vertices. Therefore, the number of directed cycles of length $\ell$ in $T$ is equal to
    $\Tr[B^\ell]/\ell+O(n^{\ell-1})$. Replacing $B$ with $B+\frac{1}{2}I$ changes the trace by only $O(n^{\ell-1})$. Hence, the number of directed cycles of length $\ell$ in $T$ is equal to \[\Tr[(B+\frac{1}{2}I)^\ell]/\ell+O(n^{\ell-1})= \frac{1}{\ell2^\ell}\Tr[(A+J)^\ell]+O(n^{\ell-1})=\bigg(\frac{1}{n^{\ell}}\Tr[(A+J)^{\ell}]+O(n^{-1})\bigg) \frac{(\ell-1)!}{2^{\ell}}\binom{n}{\ell}.\]
\end{proof}

The next lemma analyzes the spectrum of $T_n$ and the asymptotic behavior of $\Tr[T_n^{4k}]$.

\begin{lemma}\label{lem_Tn}
    $T_n$ has eigenvalues $\lambda_j=i\cot\left(\frac{(2j-1)\pi}{2n}\right)$ for $j=1,\dots,n$, and hence
    \[
    \lim_{n\to\infty} \frac{\Tr[T_n^{4k}]}{n^{4k}} = 2\cdot \sum_{i=1}^{\infty} \left(\frac{2}{(2i-1)\pi}\right)^{4k}.
    \]
\end{lemma}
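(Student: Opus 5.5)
The plan is to diagonalize $T_n$ explicitly, using the signed cyclic permutation matrix $C$ from the proof of Theorem \ref{thm_trace}. We already know that $C$ commutes with $T_n$. Moreover $C^n=-I$, so $C$ has the $n$ distinct eigenvalues $\omega_j=e^{i(2j-1)\pi/n}$, $j=1,\dots,n$, which are the $n$-th roots of $-1$. Since these eigenvalues are distinct, every eigenvector of $C$ is also an eigenvector of $T_n$. For $\omega=\omega_j$ the eigenvector of $C$ is $v=(1,\omega,\omega^2,\dots,\omega^{n-1})^T$: indeed $(Cv)_m=v_{m+1}=\omega v_m$ for $m<n$, and $(Cv)_n=-v_1=-1=\omega^n=\omega\cdot\omega^{n-1}$.

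Next I compute the eigenvalue of $T_n$ on $v$ by evaluating the first coordinate:
\[
(T_nv)_1=\sum_{m=1}^{n-1}\omega^m=\frac{\omega-\omega^n}{1-\omega}=\frac{1+\omega}{1-\omega}.
\]
Writing $\omega=e^{i\theta}$ with $\theta=(2j-1)\pi/n$, we get $\frac{1+e^{i\theta}}{1-e^{i\theta}}=\frac{2\cos(\theta/2)}{-2i\sin(\theta/2)}=i\cot(\theta/2)$. This gives exactly $\lambda_j=i\cot\frac{(2j-1)\pi}{2n}$. Since $v_1=1$, the first coordinate suffices to read off the eigenvalue.

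It follows that $\Tr[T_n^{4k}]=\sum_{j=1}^n\cot^{4k}\frac{(2j-1)\pi}{2n}$, because $i^{4k}=1$. The angles $\frac{(2j-1)\pi}{2n}$ are symmetric about $\pi/2$, so the terms pair up: $j$ and $n+1-j$ give cotangents of opposite sign and therefore equal $4k$-th powers. For fixed $j$, $\cot\frac{(2j-1)\pi}{2n}\sim\frac{2n}{(2j-1)\pi}$, so each of the two ends contributes $\sum_j\left(\frac{2}{(2j-1)\pi}\right)^{4k}$ in the limit after dividing by $n^{4k}$. This produces the factor $2$.

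The only real work is justifying the interchange of limit and sum, and I would handle it by dominated convergence. For $0<x\le\pi/2$ we have $|\cot x|\le 1/x$, so for $j\le n/2$,
\[
\frac{1}{n^{4k}}\cot^{4k}\frac{(2j-1)\pi}{2n}\le\left(\frac{2}{(2j-1)\pi}\right)^{4k},
\]
which is summable in $j$. The other half of the indices is handled by the symmetry above. Dominated convergence then gives the stated limit. A middle term with $n$ odd has $\cot(\pi/2)=0$ and contributes nothing, so it causes no trouble.
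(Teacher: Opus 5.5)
Your proposal is correct and follows essentially the same route as the paper: the eigenvectors $(1,\omega,\dots,\omega^{n-1})$ with $\omega^n=-1$, the eigenvalue $\frac{1+\omega}{1-\omega}=i\cot(\theta/2)$, the pairing of $j$ with $n+1-j$, and the domination $\frac{1}{n}\cot\frac{(2j-1)\pi}{2n}\le\frac{2}{(2j-1)\pi}$ used to interchange limit and sum. The only difference is that you certify the eigenvector property through commutation with the signed shift $C$, which has simple spectrum, and then read off the eigenvalue from the first coordinate, whereas the paper checks $(T_nv)_r$ directly for every coordinate $r$.
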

\begin{proof}
    Let $z_j=e^{\frac{(2j-1)\pi}{n}}$ for $j=1,\dots,n$ be the solutions of $z^n=-1$. Let $v_j=(1,z_j,z_j^2,\dots,z_j^{n-1})^T$, then
    \[
         (Tv_j)_r=\sum_{k=r}^{n-1} z_j^k - \sum_{k=0}^{r-2} z_j^k
                 =\frac{z_j^r-z_j^n-(1-z_j^{r-1})}{1-z_j}
                 =\frac{1+z_j}{1-z_j}z_j^r.
    \]
    Thus, $\lambda_j=\frac{1+z_j}{1-z_j}=i\cot(\frac{(2j-1)\pi}{2n})$ are the eigenvalues of $T_n$ with eigenvectors $v_j$, according to the equality $i\cot(\theta/2)=(1+e^{i\theta})/(1-e^{i\theta})$. Then
    \[  
    \frac{\Tr[T_n^{4k}]}{n^{4k}}=\sum_{j=1}^{n}\left[\frac1n\cot\left(\frac{(2j-1)\pi}{2n}\right)\right]^{4k}.
    \]
    Pairing the terms at \(x\) and \(\pi-x\) gives
    \[
    \frac{\Tr[T_n^{4k}]}{n^{4k}}
    =2\sum_{j=1}^{\lfloor n/2\rfloor}\left[\frac{1}{n}\cot\left(\frac{(2j-1)\pi}{2n}\right)\right]^{4k}.
    \]
    The RHS is bounded by an absolute summable series because for each fixed $j=1,\dots,\floor{n/2}$
    \[
        \frac{1}{n}\cot\left(\frac{(2j-1)\pi}{2n}\right) \le \frac{1}{n}\left(\frac{2n}{(2j-1)\pi}\right) = \frac{2}{(2j-1)\pi}.
    \]
    Therefore we can interchange limit and summation, and get
    \[
     \lim_{n\to\infty} \frac{\Tr[T_n^{4k}]}{n^{4k}} = 2\sum_{j=1}^{\infty}\left[\lim_{n\to\infty}\frac{1}{n}\cot\left(\frac{(2j-1)\pi}{2n}\right)\right]^{4k} = 2\cdot \sum_{i=1}^{\infty} \left(\frac{2}{(2i-1)\pi}\right)^{4k}.
    \]
\end{proof}


\noindent \textbf{Proof of Theorem \ref{thm_main}:}\smallskip

\begin{proof}
\cite{Bartley2018} showed that the number of directed cycles of length \(4k\) in the carousel tournament
is 
\[\left(1+2 \cdot \sum_{i=1}^{\infty} \left(\frac{2}{(2i-1)\pi}\right)^{4k}+o(1)\right)\frac{(4k-1)!}{2^{4k}}\binom{n}{4k},\]
matching the expression in Theorem \ref{thm_main}. It suffices to show that this quantity is also an upper bound.

For every tournament $T$, define $A$ as in Proposition \ref{prop_reduction}. By Proposition \ref{prop_reduction}, it is enough to show that
\[
    \frac{\Tr[(A+J)^{4k}]}{n^{4k}} \le 1+2\cdot \sum_{i=1}^{\infty} \left(\frac{2}{(2i-1)\pi}\right)^{4k}+o(1).
\]
By Theorem \ref{thm_trace} and Theorem \ref{thm_non_negative}, we have
\[  
    \frac{\Tr[(A+J)^{4k}]}{n^{4k}} \le \frac{\Tr[A^{4k}]+\Tr[J^{4k}]}{n^{4k}} \le 1+\frac{\Tr[T_n^{4k}]}{n^{4k}},
\]
and applying Lemma \ref{lem_Tn} completes the proof of the inequality.

Now we give an alternative explanation for why carousel tournaments can attain the upper bound in our proof. Let $C_n$ be the skew-symmetric matrix associated with the carousel tournament when $n$ is odd, then $C_nJ=0$ since each row of $C_n$ has the same numbers of $1$ and $-1$'s. Hence $\Tr[(C_n+J)^{4k}]=\Tr[C_n^{4k}]+\Tr[J^{4k}]$, namely, the equality case of Theorem \ref{thm_non_negative} holds for $C_n$. The equality case of Theorem \ref{thm_trace} also holds because $C_n=Q^TT_nQ$ for signed permutation $Q=DP$, where $D=\diag (\vec{1}_{\frac{n+1}{2}},-\vec{1}_{\frac{n-1}{2}})$, and $P$ is the permutation matrix given by $P_{i,2i-1\bmod n}=1$. In other words, one can apply a series of sign switching to vertices of the transitive tournament and obtain a digraph isomorphic to the carousel tournament.
\end{proof}

We conclude our paper with the two questions. The first one can be viewed as an exact version of Theorem \ref{thm_main}.
\begin{question}
For every fixed \(k\ge 3\) and every sufficiently large odd integer \(n\), does the carousel tournament \(\textup{Car}_n\) always maximize the number of directed \(4k\)-cycles among all \(n\)-vertex tournaments?
\end{question}
In addition to and directed cycles of length not divisible by $4$ and directed paths of arbitrary length, certain other orientations of paths and cycles have been shown to be anti-Sidorenko or Sidorenko. This suggests the following analogous question for carousel tournaments:
\begin{question}
Are there other interesting orientations of paths or cycles for which the carousel tournament asymptotically maximizes or minimizes the number of copies?
\end{question}

\bigskip
\noindent {\bf Acknowledgment. }The authors used ChatGPT 5.6 for the proof of Lemma   \ref{lem_lambda_mu}, minor auxiliary results that lead to the proof of Theorem \ref{thm_trace}, and language polishing. All AI-assisted outputs were independently checked and verified by the authors, who take full responsibility for the content of this work.

\end{document}